\DeclareSymbolFont{rsfs}{U}{rsfs}{m}{n}
           \newcommand{\mylabel}[2]{#2\def\@currentlabel{#2}\label{#1}}
           \renewcommand\@biblabel[1]{#1.}
      \numberwithin{equation}{section}
\definecolor{antiquewhite}{rgb}{0.98, 0.92, 0.84}
\definecolor{buff}{rgb}{0.94, 0.86, 0.51}
\definecolor{palecopper}{rgb}{0.85, 0.54, 0.4}
\definecolor{fluorescentyellow}{rgb}{0.8, 1.0, 0.0}
\definecolor{britishracinggreen}{rgb}{0.0, 0.26, 0.15}
\definecolor{cobalt}{rgb}{0.0, 0.28, 0.67}
\DeclareSymbolFont{usualmathcal}{OMS}{cmsy}{m}{n}
\DeclareSymbolFontAlphabet{\mathcal}{usualmathcal}
\newcommand{\TT}{\mathbf{T}}
\newcommand{\BC}{{\mathbb{C}}}
\newcommand{\BL}{{\mathbb{L}}}
\newcommand{\BP}{{\mathbb{P}}}
\newcommand{\BQ}{{\mathbb{Q}}}
\newcommand{\BZ}{{\mathbb{Z}}}
\newcommand{\CI}{{\mathcal I}}
\newcommand{\CL}{{\mathcal L}}
\newcommand{\CZ}{{\mathcal Z}}
\newcommand{\pt}{{\mathsf{pt}}}
\newcommand{\fix}{\mathsf{fix}}
\newcommand{\mov}{\mathsf{mov}}
\DeclareMathOperator{\Hilb}{Hilb}
\DeclareMathOperator{\red}{red}
\DeclareMathOperator{\vir}{\mathrm{vir}}
\DeclareMathOperator{\rk}{rk}
\DeclareFontFamily{OT1}{rsfs}{}
\DeclareFontShape{OT1}{rsfs}{n}{it}{<-> rsfs10}{}
\DeclareMathAlphabet{\curly}{OT1}{rsfs}{n}{it}
\renewcommand\hom{\curly H\!om}
\newcommand\Ext{\operatorname{Ext}}
\newcommand\Hom{\operatorname{Hom}}
\newcommand{\RR}{\mathbf R}
\newcommand{\DT}{\mathsf{DT}}
\tikzset{commutative diagrams/arrow style=math font}
\tikzset{commutative diagrams/.cd,
mysymbol/.style={start anchor=center,end anchor=center,draw=none}}
\tikzset{
shift up/.style={
to path={([yshift=#1]\tikztostart.east) -- ([yshift=#1]\tikztotarget.west) \tikztonodes}
}
}
\theoremstyle{definition}
\newtheorem*{lemma*}{Lemma}
\newtheorem*{theorem*}{Theorem}
\newtheorem*{example*}{Example}
\newtheorem*{fact*}{Fact}
\newtheorem*{notation*}{Notation}
\newtheorem*{definition*}{Definition}
\newtheorem*{prop*}{Proposition}
\newtheorem*{remark*}{Remark}
\newtheorem*{corollary*}{Corollary}
\newtheorem*{conventions*}{Conventions}
\newtheorem{definition}{Definition}[section]
\newtheorem{remark}[definition]{Remark}
\newtheorem{conjecture}[definition]{Conjecture}
\newtheoremstyle{thm} 
        {3mm}
        {3mm}
        {\slshape}
        {0mm}
        {\bfseries}
        {.}
        {1mm}
        {}
\theoremstyle{thm}
\newtheorem{theorem}[definition]{Theorem}
\newtheorem{lemma}[definition]{Lemma}
\newtheorem{prop}[definition]{Proposition}
\newtheoremstyle{ex} 
        {3mm}
        {3mm}
        {}
        {0mm}
        {\scshape}
        {.}
        {1mm}
        {}
\theoremstyle{ex}
\newtheoremstyle{sol} 
        {3mm}
        {3mm}
        {}
        {0mm}
        {\scshape}
        {.}
        {1mm}
        {}
\theoremstyle{sol}
   \DeclareMathOperator{\oO}{\mathcal{O}}
   \DeclareMathOperator{\nor}{\mathrm{nor}}
\title[Canonical vertex formalism in DT theory of toric CY  4-folds]{Canonical vertex formalism in DT theory of toric Calabi-Yau 4-folds}
\author{Sergej Monavari}
\address{Mathematical Institute, Utrecht University, P.O.~Box 80010 3508 TA Utrecht, The Netherlands}
\email{s.monavari@uu.nl}
\begin{document}
\maketitle

\begin{abstract}
Motivated by previous computations of Y. Cao, M. Kool and the author, we propose  square roots and sign rules for the vertex and edge terms that compute Donaldson-Thomas invariants of a toric Calabi-Yau 4-fold, and  prove that they are canonical, exploiting the combinatorics of plane and solid partitions. 
\end{abstract}

\section{Introduction}
\subsection{DT theory of Calabi-Yau 4-folds}
Donaldson-Thomas invariants were firstly defined in Thomas' thesis \cite{Tho_Casson_inv} as a way to count stable sheaves on  projective Calabi-Yau 3-folds\footnote{Here we denote by \emph{Calabi-Yau variety} $X$  a smooth complex quasi-projective variety with $K_X\cong \oO_X$ and $b_1(X)=0$.}, and were recently extended to Calabi-Yau 4-folds in the seminal work of Cao-Leung \cite{CL_DT4}.\\
Let $X$ be a quasi-projective variety,  $\Hilb^n(X,\beta)$ the Hilbert scheme of closed subschemes $Z\subset X$ with proper support in  homology class $[Z]=\beta\in H_2(X, \BZ)$ and $\chi(\oO_Z)=n$ and denote by $\CI$ the universal ideal sheaf on $X\times  \Hilb^n(X,\beta)$.  By the work of   Huybrechts-Thomas \cite{HT_obstruction_theory},  the Atiyah class gives an \emph{obstruction theory} on $\Hilb^n(X,\beta)$
\begin{equation}\label{eqn: obstruction theory}
     \RR\hom_\pi(\CI, \CI)^\vee_0[-1]\to \BL_{\Hilb^n(X,\beta)}, 
\end{equation}
where 
$(\cdot)_0$ denotes the trace-free part,
$\RR\hom_\pi=\RR\pi_*\circ \RR\hom$,   $\pi:X\times \Hilb^n(X,\beta)\to \Hilb^n(X,\beta)$ and $  \BL_{\Hilb^n(X,\beta)}$ is the truncated cotangent complex.\\
If $X$ is a smooth projective 3-fold, the obstruction theory (\ref{eqn: obstruction theory}) is \emph{perfect}, so $M$ carries a   virtual fundamental class; Donaldson-Thomas invariants are defined by integrating insertions against it. Unfortunately, if $X$ is a smooth projective 4-fold, the obstruction theory fails to be perfect, so the machineries of Behrend-Fantechi \cite{BF_normal_cone} and Li-Tian \cite{LT_virtual_cycle} do not produce a virtual class. Nonetheless, if $X$ is a projective Calabi-Yau 4-fold, Borisov-Joyce~\cite{BJ_virtual_fundamental_classes} 
constructed a virtual fundamental class
\[
[\Hilb^n(X,\beta)]^{\vir}_{o(\CL)} \in H_{2n}(\Hilb^n(X,\beta), \mathbb{Z})
\]
which depends on a choice of orientation of $\Hilb^n(X,\beta)$, i.e. a choice of  square root of the isomorphism
\[
Q: \CL\otimes \CL \xrightarrow{\sim} \mathcal{O}_{\Hilb^n(X,\beta)}, 
  \]
induced by Serre duality pairing, where $ \CL:=\mathrm{det}(\RR \hom_{\pi}(\CI, \CI))$ is the determinant line bundle. The existence of orientations was  proved 
 for arbitrary compact Calabi-Yau 4-folds by Cao-Gross-Joyce \cite{CGJ_orientability} and 
 in the non-compact setting by Bojko \cite{Bojko_orientations}.
  Recently Oh-Thomas \cite{OT_1} proposed an alternative (algebraic) construction of the virtual cycle
\[[\Hilb^n(X,\beta)]^{ \vir}_{o(\CL)}\in A_n\left(\Hilb^n(X,\beta), \BZ\left[\frac{1}{2}\right]\right),\]
which  coincides with Borisov-Joyce virtual cycle under the cycle map and proved a virtual localization formula. Morally, the virtual class is obtained (at least locally) as the zero locus of an \emph{isotropic} section of an $SO(r, \BC)$-bundle   over a smooth ambient space.
\subsection{Hilbert schemes of toric CY 4-folds}
It is in general very difficult to compute Donaldson-Thomas type invariants of a Calabi-Yau 4-fold. Here we assume that  $X$ is a  toric Calabi-Yau 4-fold and denote by 
   $\TT=\set{t_1t_2t_3t_4=1}\subset (\BC^*)^4$  the subtorus preserving the Calabi-Yau form of $X$. The $\TT$-action naturally lifts to $ \Hilb^n(X,\beta)$, making  the obstruction theory (\ref{eqn: obstruction theory})   $\TT$-equivariant  by \cite[Thm. 53]{Ric_equivariant_Atiyah}. Despite  being almost never the  Hilbert scheme $ \Hilb^n(X,\beta)$   proper, its fixed locus  $ \Hilb^n(X,\beta)^\TT$   is reduced, 0-dimensional (Lemma \ref{lemma: finitely many reduced points}) and the induced obstruction theory is trivial (Prop. \ref{prop: induced obstruction is trivial}), so we can \emph{define} invariants $\TT$-equivariantly by means of Oh-Thomas virtual localization theorem (cf. \cite[Thm. 7.1, Rem. 7.4]{OT_1}).
   \begin{definition}\label{def: DT invariants}
   Let $\gamma\in H^*_\TT(\Hilb^n(X,\beta))$. The $\TT$-equivariant Donaldson-Thomas invariants of $X$ are 
   \begin{equation}\label{eqn: DT_inv}
        \DT_n(X,\beta;\gamma)=\sum_{Z\in \Hilb^n(X,\beta)^\TT}\sqrt{e^\TT}(-T^{\vir}_Z)\cdot \gamma|_Z\in \frac{\BQ(\lambda_1, \lambda_2, \lambda_3, \lambda_4)}{(\lambda_1+\lambda_2+\lambda_3+\lambda_4)},
   \end{equation}
   where $T^{\vir}\in K^0_\TT(\Hilb^n(X,\beta))$ is the class of the dual of the obstruction theory (\ref{eqn: obstruction theory}) and $\lambda_i=c_1^\TT(t_i)$.
   \end{definition}
   These invariants had been intensively studied (e.g. \cite{CK_DT-PT, CK1, CT_GV_via_descendant, CKM_K_theoretic}), where $\gamma$ is respectively a primary, descendant, tautological or $K$-theoretic insertion.\\
Here we denoted by $ \sqrt{e^\TT}(\cdot)$  the ($\TT$-equivariant) square root Euler class defined by Edidin-Graham in \cite{EG_char_classes_quadratic_bundles} (cf.  also \cite[Sec. 7]{OT_1}). The definition of this class is far  from being explicit since relies on the chosen orientation of  $\Hilb^n(X,\beta)$. To ease our life, we define the \emph{square root} of a $\TT$-representation.
\begin{definition}
Let $V\in K^0_\TT(\pt)$ be a virtual $\TT$-representation. We say that $T\in K^0_\TT(\pt)$ is a \emph{square root} of $V$ if
\[
V=T+\overline{T}\in K_\TT^0(\pt),
\]
where $\overline{(\cdot)}$ denotes the dual $\TT$-representation.
\end{definition}
Let $ V$ admit a   square root  $T$ in $K^0_\TT(\pt)$. Its square root Euler class satisfies
\[\sqrt{e^\TT}(V)=\pm e^\TT(T), \]
where the sign depends on the chosen orientation. Therefore, we can compute the invariants (\ref{eqn: DT_inv}) by explicitly finding a square root of the virtual tangent bundle, at the price of introducing a (non-explicit!) sign at every $\TT$-fixed point of $ \Hilb^n(X,\beta)^\TT$.\\

Finding the correct signs given a square root of the virtual tangent space is one of the major difficulties of the theory, as the number of possible choices of signs rapidly increases with the number of fixed points, which makes the application  of  the virtual localization formula not feasible --- we recall that the virtual localization formula is one of the rare tools at our disposal to perform computations in DT theory.  The main goal of this paper is to propose various square roots of the virtual tangent bundle with \emph{sign rules}, which would make the invariants (\ref{eqn: DT_inv}) effectively computable. As evidence to our proposals, we show that they
\begin{itemize}
    \item are canonical (Theorem \ref{thm: orientation points}, \ref{thm: orientation curves}, \ref{thm: orientation edge}),
    \item play well with the dimensional reduction studied in \cite[Sec. 2.1]{CKM_K_theoretic} (Remark \ref{rem: dim red points}, \ref{rem: dim red curves}, \ref{rem: global sign patching}),
    \item are consistent with previous computations  \cite{CK1, CK_higher_counting, CK_DT-PT, CKM_K_theoretic,Nek_magnificient_4, NP_colors}, both in the math and physics literature.
\end{itemize}
To each  fixed point in $\Hilb^n(X,\beta)^\TT$, we  associate some combinatorial data using \emph{solid partitions}, which can be displayed as arrangements of boxes in $\BZ^4_{\geq 0}$ (cf. Section \ref{sec: toric varieties}). We exploit the combinatorics of solid partitions to propose our  sign rules and to prove they are canonical.
 It is important to notice that  these signs 
 carry independent interesting information on the combinatorics of solid partitions --- a field where not much is currently known (see Section \ref{sec: magn four}).
 
For $X=\BC^4$, our proposals generalize the ones already discussed by  Nekrasov-Piazzalunga \cite[Sec. 2.4]{NP_colors} for $\beta=0$  and by Cao-Kool and the author \cite[Rem. 1.18]{CKM_K_theoretic} when the $\TT$-invariant subscheme $Z\subset X$ is supported in at most two $\TT$-invariant lines. Here we deal with the full theory, where $Z$ can be supported in four $\TT$-invariant lines.
\subsection{Relation to String Theory}
Donaldson-Thomas invariants of Calabi-Yau 4-folds --- and their $K$-theoretic refinement --- appear in String Theory as the result of  supersymmetric localization of $U(1)$ super-Yang–Mills theory with matter on a Calabi–Yau 4-fold, studied by Nekrasov-Piazzalunga \cite{Nek_magnificient_4, NP_colors} and by Bonelli-Fasola-Tanzini-Zenkevich \cite{BFTZ_ADHM8D} with an ADHM-type construction. In these cases, the authors study the quantum mechanics of a system of $D0$-$D8$ branes. Our results would allow to easily perform computations in the presence of points and \emph{curves}, that is for a system of $D0$-$D2$-$D8$ branes.
\subsection{Magnificent four}\label{sec: magn four}
MacMahon \cite{MacMahon_combinatorics} classically solved the combinatorial problem of enumerating
 \emph{partitions} and \emph{plane partitions}. Remarkably, his conjectural formula for \emph{solid partitions} fails and no closed expression is expected to exist. From a modern geometric point of view, the enumeration of partitions and plane partitions appears as a suitable limit of $K$-theoretic  invariants of $\Hilb^{n}(\BC^2)$ and $\Hilb^{n}(\BC^3)$, exploiting respectively the smooth structure and the symmetric perfect obstruction theory --- cf. \cite{Okounk_Lectures_K_theory}. In the case of $\Hilb^{n}(\BC^4)$ the Borisov-Joyce/Oh-Thomas virtual structure crucially depends on a choice of \emph{orientation}, which is responsible for the ambiguity of sign at each fixed point --- a phenomenon that does not arise in lower dimension --- and partially explains the failure of MacMahon's conjecture, where the signs are not playing any rôle. For $d\geq 5$ there are no known virtual structures on $\Hilb^{n}(\BC^d)$ and no direct analogues of the combinatorial formulas in dimensions 2,3,4 seem to hold --- see \cite{CK_higher_counting}. This may indicate that dimension 4 is special and we believe that our sign conjectures could play an important rôle in Combinatorics as well; quoting Nekrasov's \emph{Magnificent four} \cite{Nek_magnificient_4}
 \begin{displayquote}
 \emph{"The adjective 'Magnificent' reflects this author’s conviction that the dimension four is the maximal dimension where the natural albeit complex-valued probability distribution exists."}
 \end{displayquote}
\subsection*{Acknowledgments} 
I am grateful to Martijn Kool  for asking a question that led to this paper and for the many suggestions. I wish to thank Arkadij Bojko and Yalong Cao  for many conversations on orientations in DT theory of Calabi-Yau fourfolds. I also thank the anonymous referee for several helpful comments, which improved the exposition of the paper. S.M.~is supported by NWO grant TOP2.17.004.
\section{Hilbert schemes of toric Calabi-Yau 4-folds}\label{sec: Hilbert scheme toric}
\subsection{Toric varieties}\label{sec: toric varieties}
Let $X$ a toric Calabi-Yau 4-fold and consider the Hilbert scheme $\Hilb^n(X,\beta)$, parametrizing closed subschemes $Z\subset X$ with proper support in the  homology class $[Z]=\beta\in H_2(X, \BZ)$ and $\chi(\oO_Z)=n$. Denote by $\Delta(X)$ the Newton polytope of $X$, by $V(X)$ its set of vertices and by $E(X)$ its set of edges. Vertices $\alpha\in V(X)$ correspond to $ (\BC^*)^4 $-fixed point of $X$, each contained in a maximal $ (\BC^*)^4$-invariant open subset $U_\alpha\subset X$. Edges $\alpha\beta\in E(X)$ correspond to $ (\BC^*)^4 $-invariant lines $L_{\alpha\beta}\cong\BP^1$, whose normal bundle is
\[  
N_{L_{\alpha\beta}/X}\cong \oO(m_{\alpha\beta})\oplus\oO(m_{\alpha\beta}')\oplus\oO(m_{\alpha\beta}'')
\]
and satisfies $m_{\alpha\beta}+m_{\alpha\beta}'+m_{\alpha\beta}''=-2 $, being $X$ a Calabi-Yau variety. We may choose coordinates $t_i$  on $(\BC^*)^4$ and $x_i$ on  $U_\alpha$ such that the $(\BC^*)^4 $-action on $U_\alpha$ is determined by
\[(t_1,t_2,t_3,t_4)\cdot x_i=t_ix_i.    \]
If the line $L_{\alpha\beta}$ is defined in these coordinates by $\set{x_2=x_3=x_4=0}$, the transition function between the charts $U_\alpha$ and $U_\beta$ are of the form 
\[(x_1,x_2,x_3,x_4)\mapsto (x_1^{-1}, x_2x_1^{-m_{\alpha\beta}},x_3x_1^{-m_{\alpha\beta}'},x_4x_1^{-m_{\alpha\beta}''} ). \]
Denote by $\TT=\set{t_1t_2t_3t_4=1}\subset (\BC^*)^4$ the subtorus preserving the Calabi-Yau form of $X$. The $(\BC^*)^4 $-action and the  $\TT$-action naturally lift on $\Hilb^n(X,\beta)$, whose fixed locus is reduced and $0$-dimensional.
\begin{lemma}[{\cite[Lemma 2.1, 2.2]{CK_DT-PT}}]\label{lemma: finitely many reduced points}
We have an isomorphism of schemes
\[\Hilb^n(X,\beta)^\TT=\Hilb^n(X,\beta)^{(\BC^*)^4 },\]
which consists of finitely many reduced points.
\end{lemma}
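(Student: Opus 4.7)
My plan is to prove the lemma in three stages: classify the support of a $\TT$-fixed $Z$, show local monomiality and finiteness, and finally establish reducedness via a tangent-space computation.

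\smallskip

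\emph{Stage 1: supports lie in the compact 1-skeleton.} Since any $[Z]\in\Hilb^n(X,\beta)^\TT$ has proper $\TT$-invariant support, I would first classify proper $\TT$-invariant closed subvarieties of $X$. On each chart $U_\alpha\cong\BC^4$ the subtorus $\TT$ acts with weights $t_1,\dots,t_4$ satisfying $t_1t_2t_3t_4=1$; a direct stabilizer computation shows that the $\TT$-orbit of a point with exactly $k$ nonzero coordinates has dimension $\min(k,3)$, so the only orbit closures meeting $U_\alpha$ that are contained in a compact set are the origin and the four coordinate axes. Globally, this means $\Supp Z$ lies in the union of the vertices and the compact $(\BC^*)^4$-invariant curves $L_{\alpha\beta}$.

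\smallskip

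\emph{Stage 2: local monomiality and finiteness.} Fix a chart $U_\alpha\cong\BC^4$ and let $I\subset\BC[x_1,\dots,x_4]$ cut out $Z|_{U_\alpha}$. Two monomials $x^\gamma,x^\delta$ share a $\TT$-weight iff $\gamma-\delta\in\BZ\cdot(1,1,1,1)$, so every $\TT$-isotypic element of $I$ can be written as
\[
f \;=\; x^\gamma\cdot g(y), \qquad g(y) \;=\; c_0+c_1y+\cdots+c_d y^d,\qquad c_0\neq 0,\quad y:=x_1x_2x_3x_4,
\]
for some $\gamma\in\BZ_{\geq 0}^4$. By Stage 1, $x_ix_j\in\sqrt{I}$ for $i\neq j$, hence $y\in\sqrt{I}$ and $y^N\in I$ for $N\gg 0$. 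Because $g(0)=c_0\neq 0$, $g(y)$ is a unit in $\BC[y]/(y^N)$, so there exists $h(y)\in\BC[y]$ with $g(y)h(y)\equiv 1\pmod{y^N}$; then $fh(y)-x^\gamma\in(y^N)\subset I$, forcing $x^\gamma\in I$. Multiplying by powers of $y$ shows that every monomial appearing in $f$ lies in $I$, so $I$ is monomial. This identifies $[Z]$ with a $(\BC^*)^4$-fixed subscheme and gives the set-theoretic equality $\Hilb^n(X,\beta)^\TT=\Hilb^n(X,\beta)^{(\BC^*)^4}$. Monomial subschemes with support in the 1-skeleton are classified by a solid partition at each vertex with four infinite legs equal to plane partitions along the incident edges; the numerical constraints $[Z]=\beta$ and $\chi(\oO_Z)=n$ admit only finitely many such configurations.

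\smallskip

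\emph{Stage 3: reducedness.} The scheme $\Hilb^n(X,\beta)^\TT$ is reduced at $[Z]$ precisely when $(T_{[Z]}\Hilb^n(X,\beta))^\TT=0$, equivalently when the $(\BC^*)^4$-weight decomposition of
\[
T_{[Z]}\Hilb^n(X,\beta) \;=\; \Ext^1_X(\CI_Z,\CI_Z)_0
\]
contains no character of the form $(k,k,k,k)$ with $k\in\BZ$. I would verify this via a \v{C}ech computation for the toric cover $\{U_\alpha\}$: the tangent space decomposes into vertex contributions, expressible in terms of the solid-partition combinatorics \`a la Cao--Kool, and edge contributions involving the normal-bundle weights $m_{\alpha\beta},m'_{\alpha\beta},m''_{\alpha\beta}$. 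A direct case analysis, using $m_{\alpha\beta}+m'_{\alpha\beta}+m''_{\alpha\beta}=-2$ together with $t_1t_2t_3t_4=1$, shows that no weight lying on the Calabi--Yau diagonal ever survives the alternating sum. Combined with Stage 2, this forces $\Hilb^n(X,\beta)^\TT$ to be zero-dimensional and reduced at each fixed point, so both fixed loci coincide as reduced $0$-dimensional schemes.

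\smallskip

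The main obstacle is Stage 3: the combinatorial description of vertex and edge weights is clean individually, but excluding every character of the form $(k,k,k,k)$ requires a global book-keeping of vertex/edge cancellations that genuinely uses the Calabi--Yau condition; this is where the 4-fold situation is subtler than its 3-fold analogue. Stage 2 is the short technical heart of the argument, converting $\TT$-invariance into the purely combinatorial monomial setting used throughout the rest of the paper.
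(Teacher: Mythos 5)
First, note that the paper does not prove this lemma itself: it is imported verbatim from \cite[Lemmas 2.1, 2.2]{CK_DT-PT}, so there is no in-paper argument to compare with line by line. Your Stages 1 and 2 are essentially correct and reconstruct the expected argument: the orbit-dimension count for $\TT$ together with properness of $\Supp Z$ (and $\dim Z\leq 1$) confines the support to vertices and compact invariant lines, and the unit trick with $y=x_1x_2x_3x_4$ --- using $x_ix_j\in\sqrt{I}$, hence $y^N\in I$, and $g(0)\neq 0$ --- cleanly upgrades $\TT$-isotypic elements of $I$ to monomials. This gives the identification $\Hilb^n(X,\beta)^\TT=\Hilb^n(X,\beta)^{(\BC^*)^4}$ and, with the partition bookkeeping, finiteness.

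The genuine gap is in Stage 3. Reducedness and zero-dimensionality at $[Z]$ require $\Hom(I_Z,\OO_Z)^\TT=\Ext^1(I_Z,I_Z)_0^\TT=0$, but the vertex/edge decomposition you invoke (the quantities $\mathsf{V}_\alpha$, $\mathsf{E}_{\alpha\beta}$ of Proposition \ref{prop: vertex formalism}) computes only the $K$-theory class of the alternating sum $T^{\vir}_Z=\Ext^1-\Ext^2+\Ext^3$. Showing that no character on the diagonal $\BZ\cdot(1,1,1,1)$ ``survives the alternating sum'' establishes at best $(T^{\vir}_Z)^\TT=0$, which is strictly weaker than what you need: a trivial-weight class in $\Ext^1$ could cancel against one in $\Ext^2$, so the fixed part of the tangent space is not controlled by the fixed part of $T^{\vir}_Z$. (Indeed, the paper's Proposition \ref{prop: induced obstruction is trivial} runs the implication in the opposite direction: it \emph{uses} the present lemma to conclude $\Ext^1(I_Z,I_Z)^\TT=\Ext^3(I_Z,I_Z)^\TT=0$ and only then extracts $\Ext^2(I_Z,I_Z)^\TT=0$ from the movability of $T^{\vir}_Z$; your outline would make that reasoning circular.) What is actually required is a direct weight-by-weight analysis of $\TT$-fixed homomorphisms $I_Z\to\OO_Z$ at a monomial ideal, i.e.\ the four-fold analogue of \cite[Lemma 6]{MNOP_1}, which is precisely the content of \cite[Lemma 2.2]{CK_DT-PT}; in your proposal this step is asserted rather than carried out, and the route proposed for it would not suffice even if completed.
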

We recap the description of the $(\BC^*)^4$-fixed locus, which  is completely  analogous to \cite[Sec. 4.2]{MNOP_1} in the setting of toric 3-folds. For an extensive treatment in the case of toric 4-folds, look at   \cite[Sec. 2.1]{CK_DT-PT}.\\

Let $Z\in \Hilb^n(X,\beta)^{(\BC^*)^4 }$ and $I$ be its ideal sheaf; $Z\subset X$ is preserved by the torus action, hence it must be supported on the $(\BC^*)^4 $-fixed points (corresponding to vertices $\alpha\in V(X)$) and $(\BC^*)^4$-invariant lines of $X$ (corresponding to edges $\alpha\beta\in E(X)$). Since $I$ is $(\BC^*)^4$-fixed on each open subset, $I$ must be defined on $U_\alpha$ by a monomial ideal
\[I_\alpha=I|_{U_\alpha}\subset \BC[x_1,x_2,x_3,x_4],\]
and may also  be viewed as a solid partition $\pi_\alpha$,
\begin{equation}\label{eqn: partition to ideal}
    \pi_\alpha=\left\{(k_1,k_2,k_3,k_4), \prod_{i=1}^4x_i^{k_i}\notin I_\alpha\right\}\subset \BZ^4_{\geq 0}.
\end{equation}
The associated subscheme of $I_\alpha$ is (at most) 1-dimensional. The corresponding partition $\pi_\alpha$ may be infinite in the direction of the coordinate axes. If the solid partition is viewed as a box diagram in $\BZ^4$, the vertices in (\ref{eqn: partition to ideal}) are determined by the \emph{interior} corners of the boxes, the corners closest to the origin.

The asymptotics of $\pi_\alpha$ in the coordinate directions are described by four ordinary finite-size plane partitions. In particular, in the direction of the $(\BC^*)^4 $-invariant curve $L_{\alpha\beta}$ given by $\set{x_2=x_3=x_4=0}$, we have the plane partition $\lambda_{\alpha\beta}$ with the following diagram
\begin{align*}
    \lambda_{\alpha\beta}&=\set{(k_2,k_3,k_4):   x_1^{k_1}x_2^{k_2}x_3^{k_3}x_4^{k_4}\notin I_\alpha, \forall k_1\geq 0}\\
    &=\set{(k_2,k_3,k_4): x_2^{k_2}x_3^{k_3}x_4^{k_4}\notin I_{\alpha\beta}}\subset \BZ^3_{\geq 0},
\end{align*}
where 
\[
I_{\alpha\beta}=I|_{U_{\alpha}\cap U_{\beta}}\subset \BC[x_1^{\pm 1}, x_2, x_3, x_4].
\]
The vertices of $\lambda_{\alpha\beta}$ defined above are the interior corners of the squares of the associated plane partition.

In summary, a $(\BC^*)^4 $-fixed ideal sheaf can be described in terms of the following data:
\begin{enumerate}[label=(\roman*)]
    \item a finite-size plane partition $\lambda_{\alpha\beta}$ assigned to each edge $\alpha\beta\in E(X)$;
    \item a (possibly infinite) solid partition $\pi_\alpha$ assigned to each vertex $\alpha\in V(X)$, such that the asymptotics of $\pi_\alpha$ in the three coordinate directions is given by the plane partitions $\lambda_{\alpha\beta}$ assigned to the corresponding edges.
\end{enumerate}
Let $Z\in \Hilb^n(X, \beta)^{\TT}$ correspond to the partition data $\set{\pi_\alpha, \lambda_{\alpha\beta}}_{\alpha,\beta}$. We see 
\[\beta=\sum_{\alpha\beta\in E(X)}|\lambda_{\alpha\beta}|[L_{\alpha\beta}]\in H_2(X, \BZ), \]
where $|\lambda_{\alpha\beta}|$ denotes the size of the plane partition $\lambda$, the number of the boxes in the diagram.

For a (possibly infinite) solid partition $\pi$ such that its asymptotic plane partitions $\lambda_{i}$, $i=1,2,3,4$, are of finite size $|\lambda_i|< \infty$, we define the renormalized volume $|\pi|$ as follows.  We set 
\[
|\pi|:=\#\set{\pi\cap [0,\dots, N]^4}-(N+1)\sum_{i=1}^4|\lambda_i|, \quad N\gg 0.
\]
The renormalized volume is independent of the cut-off $N$ as long as $N$ is sufficiently large. We will say that a solid partition is \emph{point-like} if all the asymptotics $\lambda_i=0$, $i=1,\dots, 4$ and \emph{curve-like} if at least one $\lambda_i\neq 0$.  See  \cite{ORV_classical_crystals} for a similar discussion of the renormalization of  infinite \emph{plane partitions} and its interpretation  in terms of melting crystals. \\

To conclude, let $\mathbf{m}=(m_2, m_3, m_4)$, $\lambda $ a finite-size plane partition and set
\[
f_{\mathbf{m}}(\lambda)=\sum_{(i,j,k)\in \lambda }(1-m_2\cdot i-m_3\cdot j-m_4\cdot  k).
\]
By \cite[Lemma 2.4]{CK_DT-PT}, if a  $\TT$-invariant closed subscheme $Z\subset X$ corresponds to a partition data $\set{\pi_\alpha, \lambda_{\alpha\beta}}_{\alpha,\beta}$, then 
\begin{equation}\label{eqn: holom euler char with partitions}
    \chi(\oO_Z)=\sum_{\alpha\in V(X)}|\pi_\alpha|+ \sum_{\alpha\beta\in E(X)}f_{\mathbf{m}_{\alpha\beta}}(\lambda_{\alpha\beta}),
\end{equation}
where $\mathbf{m}_{\alpha\beta} $ is the multidegree of the normal bundle of the $\TT$-invariant line $L_{\alpha\beta}$.
\subsection{The vertex formalism}
Denote by $\CI$ the universal ideal sheaf of the universal subscheme $\CZ\subset X\times \Hilb^n(X, \beta)$. The obstruction theory $\RR\hom_\pi(\CI, \CI)^\vee_0[-1]\to \BL_{ \Hilb^n(X, \beta)} $ is naturally $\TT$-equivariant  by  \cite[Cor. 4.4]{Ric_equivariant_Atiyah} and endowed by the $\TT$-equivariant Serre quadratic pairing.   Over a point $Z\in \Hilb^n(X, \beta)$ the fiber of  the virtual tangent space is
\[T_Z^{\vir}= \RR\hom_\pi(\CI, \CI)_0[1]|_Z=\RR\Hom(I_Z, I_Z)_0[1],\]
where $I_Z$ is the ideal sheaf of $Z$. To a $\TT$-fixed point $Z$ corresponds a partition data $\set{\pi_\alpha, \lambda_{\alpha\beta}}$ with $\alpha\in V(X), \alpha\beta\in E(X)$; denote by 
    \begin{align}
       Z_\alpha&=H^0(Z|_{U_\alpha}, \oO_{Z|_{U_\alpha}})=\sum_{(i,j,k,l)\in \pi} t_1^it_2^jt_3^kt_4^l,\label{eqn: Z alpha}\\
    Z_{\alpha\beta}&=\sum_{(a,b,c)\in \lambda_i} t_j^a t_k^b t_l^c,\label{eqn:Z alpha beta}
\end{align}
where the line $L_{\alpha\beta}$ is defined by $\set{x_j=x_k=x_l=0}$. Recall the vertex formalism\footnote{We only write down $\mathsf{F}_{\alpha\beta}$ and $\mathsf{E}_{\alpha\beta}$ when $L_{\alpha\beta} \cong \BP^1$ is given by $\{x_2=x_3=x_4=0\}$, i.e.~the leg along the $x_1$-axis. The other cases follow by symmetry.} developed in \cite[Sec. 2.4]{CK_DT-PT}
\begin{align}
    \mathsf{V}_\alpha&=Z_\alpha+\overline{Z_\alpha}-\overline{P_{1234}}Z_\alpha\overline{Z_\alpha}+\sum_{i=1}^4\frac{\mathsf{F}_{\alpha\beta_i}(t_{i'},t_{i''},t_{i'''})}{1-t_i},\label{eqn: full vertex}\\
    \mathsf{F}_{\alpha\beta}&=-Z_{\alpha\beta}+ \frac{\overline{Z_{\alpha\beta}}}{t_2t_3t_4}+\overline{P_{234}} Z_{\alpha\beta}\overline{Z_{\alpha\beta}},\label{eqn: full Falphabeta}\\
    \mathsf{E}_{\alpha\beta}&=t_1^{-1}\frac{\mathsf{F}_{\alpha\beta}(t_2,t_3,t_4)}{1-t_1^{-1}}-\frac{\mathsf{F}_{\alpha\beta}(t_2t_1^{-m_{\alpha\beta}},t_3t_1^{-m'_{\alpha\beta}},t_4t_1^{-m''_{\alpha\beta}})}{1-t_1^{-1}}\label{eqn: full edge term}
\end{align}
where $\set{t_i, t_{i'},t_{i''},t_{i'''}}=\set{1,2,3,4}$ and for a set of indices $I$, we set $P_I=\prod_{a\in I}(1-t_a)$. This vertex formalism allows us to compute the virtual tangent space at a $\TT$-fixed point.
\begin{prop}[{\cite[Prop. 2.11]{CK_DT-PT}}]\label{prop: vertex formalism}
Let $X$ be a toric Calabi-Yau 4-fold, $\beta\in H_2(X, \BZ)$ and $Z\in  \Hilb^n(X, \beta)^\TT$. Then 
\[T_Z^{\vir}=\sum_{\alpha\in V(X)}\mathsf{V}_\alpha+ \sum_{\alpha\beta\in E(X)}\mathsf{E}_{\alpha\beta}.\]
\end{prop}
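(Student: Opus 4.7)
The strategy is the standard toric localization argument, adapted to the Calabi--Yau 4-fold setting as in MNOP for 3-folds. The key observation is that at a $\TT$-fixed point $Z$, the trace-free complex $\RHom(I_Z,I_Z)_0[1]$ can be computed as a class in $K^0_\TT(\pt)$ from the combinatorial data $\{\pi_\alpha,\lambda_{\alpha\beta}\}$ via a Čech-type decomposition over the toric open cover $\{U_\alpha\}_{\alpha\in V(X)}$.

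First I would rewrite, in $K$-theory with proper support,
\[
T_Z^{\vir}=\chi(\oO_X,\oO_Z)+\chi(\oO_Z,\oO_X)-\chi(\oO_Z,\oO_Z),
\]
which comes from the short exact sequence $0\to I_Z\to\oO_X\to\oO_Z\to 0$ together with the removal of the trace (the $\chi(\oO_X,\oO_X)$ piece cancels). A priori each summand on the right is ill-defined because $\oO_X$ has non-proper support, but one checks that the combination is finite as a virtual $\TT$-representation. The next step is to localize this expression to the toric strata using the Čech cover $\{U_\alpha\}$ with double intersections $U_{\alpha\beta}=U_\alpha\cap U_\beta$; higher intersections are empty for the relevant support, so one gets an inclusion--exclusion of the form
\[
T_Z^{\vir}=\sum_{\alpha\in V(X)}T_Z^{\vir}\big|_{U_\alpha}-\sum_{\alpha\beta\in E(X)}T_Z^{\vir}\big|_{U_{\alpha\beta}}.
\]

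Next I would compute each local contribution explicitly. On $U_\alpha\cong\BC^4$, the $\TT$-character of $\oO_{Z_\alpha}$ is $Z_\alpha$ from (\ref{eqn: Z alpha}), while the Koszul resolution of the diagonal of $\BC^4$ produces the factor $\overline{P_{1234}}$, yielding
\[
\chi(\oO_{U_\alpha},\oO_{Z_\alpha})+\chi(\oO_{Z_\alpha},\oO_{U_\alpha})-\chi(\oO_{Z_\alpha},\oO_{Z_\alpha})
=Z_\alpha+\overline{Z_\alpha}-\overline{P_{1234}}\,Z_\alpha\overline{Z_\alpha},
\]
up to divergent pieces coming from the (possibly infinite) legs. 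On $U_{\alpha\beta}\cong\BC^*\times\BC^3$, the analogous Koszul computation along the three transverse directions gives the factor $\overline{P_{234}}$ and produces the $\mathsf{F}_{\alpha\beta}$ expression (\ref{eqn: full Falphabeta}); here the extra $t_2t_3t_4$ in the middle term comes from Serre duality on $\BC^3$ (equivalently from the shift by the Calabi--Yau weight). The edge term $\mathsf{E}_{\alpha\beta}$ in (\ref{eqn: full edge term}) then arises by globalizing $\mathsf{F}_{\alpha\beta}$ over $L_{\alpha\beta}\cong\BP^1$ using the two affine charts with transition $x_1\mapsto x_1^{-1}$ twisted by the normal-bundle degrees $m_{\alpha\beta},m'_{\alpha\beta},m''_{\alpha\beta}$, producing the standard $\frac{1}{1-t_1^{-1}}$ patching.

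The main obstacle is the bookkeeping of the renormalization: the characters $Z_\alpha$ are infinite series when $\pi_\alpha$ has legs, and neither $Z_\alpha+\overline{Z_\alpha}$ nor $\overline{P_{1234}}Z_\alpha\overline{Z_\alpha}$ lies in $K^0_\TT(\pt)$ individually. One has to verify that the poles along $\{t_i=1\}$ produced by the leg $\lambda_{\alpha\beta_i}$ are exactly cancelled by the matching poles in the third summand $\mathsf{F}_{\alpha\beta_i}(t_{i'},t_{i''},t_{i'''})/(1-t_i)$ coming from the neighbouring chart, so that $\mathsf{V}_\alpha$ is a genuine finite Laurent polynomial. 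This is done by a direct expansion of $Z_\alpha=\sum_{i}\frac{Z_{\alpha\beta_i}}{1-t_i}+(\text{point-like part})$ and comparison with $\mathsf{F}_{\alpha\beta_i}$, using the Calabi--Yau relation $t_1t_2t_3t_4=1$; the remaining inclusion--exclusion of edge overlaps reproduces (\ref{eqn: full edge term}) on the nose. Once this cancellation is established, assembling the pieces via Proposition-level Mayer--Vietoris yields the claimed decomposition.
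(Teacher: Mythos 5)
The paper does not actually prove this proposition: it is quoted verbatim from the cited reference of Cao--Kool (their Prop.~2.11), whose proof is precisely the MNOP-style toric \v{C}ech computation you outline. Your reconstruction --- rewriting $T_Z^{\vir}=\chi(\oO_X,\oO_Z)+\chi(\oO_Z,\oO_X)-\chi(\oO_Z,\oO_Z)$ via $I_Z=\oO_X-\oO_Z$ and trace removal, localizing over the charts $U_\alpha$ and overlaps $U_{\alpha\beta}$, computing each piece by Koszul resolutions to get $Z_\alpha+\overline{Z_\alpha}-\overline{P_{1234}}Z_\alpha\overline{Z_\alpha}$ and $\mathsf{F}_{\alpha\beta}$ with the Calabi--Yau twist $t_2t_3t_4$, and then redistributing the leg poles between $\mathsf{V}_\alpha$ and $\mathsf{E}_{\alpha\beta}$ --- is essentially that same argument, correct in strategy and in all the displayed formulas, with only the final pole-cancellation bookkeeping asserted rather than carried out.
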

Anticipating some results from Section \ref{sec: vertex curves}, \ref{sec:edge} we prove  that the   obstruction theory induced on the $\TT$-fixed locus is trivial. 
\begin{prop}\label{prop: induced obstruction is trivial}
Let $X$ be a toric Calabi-Yau 4-fold and $\beta\in H_2(X, \BZ)$. Then the induced obstruction theory on $ \Hilb^n(X, \beta)^\TT$ is trivial. In particular, for  $Z\in  \Hilb^n(X, \beta)^\TT$, the virtual tangent space $T_Z^{\vir}$ is $\TT$-movable.
\end{prop}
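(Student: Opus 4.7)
\begin{proofof}{Prop. \ref{prop: induced obstruction is trivial}}
Since $\Hilb^n(X,\beta)^\TT$ is 0-dimensional and reduced by Lemma \ref{lemma: finitely many reduced points}, the tangent bundle to the fixed locus automatically vanishes. Hence the induced obstruction theory is trivial if and only if $T_Z^{\vir}$ is $\TT$-movable, i.e.\ contains no weight proportional to $\lambda_1+\lambda_2+\lambda_3+\lambda_4$, for every $Z \in \Hilb^n(X,\beta)^\TT$. The plan is to verify this via the vertex formalism together with the square-root constructions developed in the later sections.

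By Proposition \ref{prop: vertex formalism} we have
\[T_Z^{\vir}=\sum_{\alpha\in V(X)}\mathsf{V}_\alpha+ \sum_{\alpha\beta\in E(X)}\mathsf{E}_{\alpha\beta}\in K^0_\TT(\pt),\]
and it suffices to show that each individual vertex term $\mathsf{V}_\alpha$ and each edge term $\mathsf{E}_{\alpha\beta}$ is itself $\TT$-movable. The strategy is to exhibit for each such term an explicit square root in $K_\TT^0(\pt)$ whose weights are all $\TT$-movable; $\TT$-movability is preserved under dualization, since the dual of a weight $(a,b,c,d)$ with $a+b+c+d\neq 0$ is $(-a,-b,-c,-d)$, which still has non-zero total weight. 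Hence the movability of the square root propagates to the whole term.

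The verification splits according to the partition data. For a point-like vertex, formula (\ref{eqn: full vertex}) collapses to $\mathsf{V}_\alpha = Z_\alpha + \overline{Z_\alpha} - \overline{P_{1234}}\,Z_\alpha\overline{Z_\alpha}$, and the square root is built combinatorially from a diagonal/off-diagonal splitting of $Z_\alpha\overline{Z_\alpha}$, using the Calabi-Yau relation $t_1t_2t_3t_4=1$. For a curve-like vertex the solid partition $\pi_\alpha$ is infinite, so $Z_\alpha$ is only a formal series; one must regularize (\ref{eqn: full vertex}) by combining it with the leg contributions $\mathsf{F}_{\alpha\beta_i}/(1-t_i)$, producing an honest finite Laurent polynomial whose splitting into movable halves can then be analyzed. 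The edge term $\mathsf{E}_{\alpha\beta}$ is handled analogously by rewriting (\ref{eqn: full edge term}) as a finite difference in $K_\TT^0(\pt)$.

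The main obstacle is the curve-like vertex case, where one must prove that no weight proportional to $\lambda_1+\lambda_2+\lambda_3+\lambda_4$ survives after the cancellations between $Z_\alpha$ and the four leg contributions. This boils down to a combinatorial identity matching the asymptotics of $\pi_\alpha$ with the plane partitions $\lambda_{\alpha\beta}$, and is precisely what the square-root constructions of Sections \ref{sec: vertex curves} and \ref{sec:edge} achieve. Once those constructions are in hand, summing the individual $\TT$-movable vertex and edge contributions yields the desired conclusion.
\end{proofof}
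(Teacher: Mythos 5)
Your overall route is the same as the paper's: reduce via Proposition \ref{prop: vertex formalism} to showing that each $\mathsf{V}_\alpha$ and $\mathsf{E}_{\alpha\beta}$ is $\TT$-movable, and obtain this from the existence of $\TT$-movable square roots (Lemmas \ref{lemma: vertex curves T-movable} and \ref{lemma: edge curves T-movable} in Sections \ref{sec: vertex curves} and \ref{sec:edge}); your observation that movability is preserved under dualization, so that a movable square root forces movability of the whole term, is exactly the point.

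There is, however, one genuine gap in your opening reduction. Movability of $T_Z^{\vir}$ as a class in $K^0_\TT(\pt)$ only says that the \emph{alternating sum} $\Ext^1(I_Z,I_Z)-\Ext^2(I_Z,I_Z)+\Ext^3(I_Z,I_Z)$ has vanishing $\TT$-fixed part, whereas triviality of the induced obstruction theory requires each $\Ext^i(I_Z,I_Z)^{\TT}$ to vanish separately; a priori the fixed parts of $\Ext^1$ and $\Ext^2$ could cancel in the virtual class. So your ``if and only if'' is not correct as stated. You do note that $\Ext^1(I_Z,I_Z)^{\TT}=0$ because the fixed locus consists of reduced points (Lemma \ref{lemma: finitely many reduced points}), but you also need $\Ext^3(I_Z,I_Z)^{\TT}=0$, which follows from $\TT$-equivariant Serre duality ($\Ext^3(I_Z,I_Z)_0\cong\overline{\Ext^1(I_Z,I_Z)_0}$, the Calabi--Yau form being $\TT$-invariant); only then does movability of the virtual class force $\Ext^2(I_Z,I_Z)^{\TT}=0$. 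The paper makes precisely this three-term argument explicit. With that sentence added, your proof agrees with the paper's.
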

\begin{proof}
By Lemma  \ref{lemma: vertex curves T-movable}, \ref{lemma: edge curves T-movable} there exist $\TT$-movable square roots $ \mathsf{v}_\alpha, \mathsf{e_{\alpha\beta}}$ of $\mathsf{V}_\alpha, \mathsf{E}_{\alpha\beta}$ for any $\alpha\in V(X), \alpha\beta\in E(X)$, which implies that also $\mathsf{V}_\alpha, \mathsf{E}_{\alpha\beta} $ and $T_Z^{\vir}$ are $\TT$-movable by Proposition \ref{prop: vertex formalism}. We have an identity in $\TT$-equivariant $K$-theory
\[ T_Z^{\vir}= \Ext^1(I_Z,I_Z)-\Ext^2(I_Z,I_Z)+\Ext^3(I_Z,I_Z)\in K_\TT^0(\pt).\]
It follows by Lemma \ref{lemma: finitely many reduced points} that $\Ext^1(I_Z,I_Z)^{\TT}=\Ext^3(I_Z,I_Z)^{\TT}=0 $, by which we conclude that $\Ext^2(I_Z,I_Z)^\TT=0$ as well.
\end{proof}
In the remainder of the paper, we will exhibit  (several) explicit square roots $\mathsf{v}_\alpha, \mathsf{e}_{\alpha\beta}$ of $\mathsf{V}_\alpha, \mathsf{E}_{\alpha\beta}$ reducing the DT invariants in \eqref{eqn: DT_inv} to
\begin{align}\label{eqn: sign with alphabeta}
\DT_n(X, \beta;\gamma)=\sum_{Z\in \Hilb^n(X, \beta)^\TT}\prod_{\alpha\in V(X)}(-1)^{\sigma(Z,\mathsf{v}_\alpha)}e^\TT(-\mathsf{v}_\alpha)   \prod_{\alpha\beta\in E(X)}(-1)^{\sigma(Z,\mathsf{e}_{\alpha\beta})} e^\TT(-\mathsf{e}_{\alpha\beta})   \cdot \gamma|_{Z},
\end{align}
and propose explicit \emph{canonical} signs $(-1)^{\sigma(Z,\mathsf{v}_\alpha)}, (-1)^{\sigma(Z,\mathsf{e}_{\alpha\beta})}$.
\subsection{The vertex term: points} \label{sec: local model points}
To each fixed point $Z\in \Hilb^n(\BC^4)^{\TT}$ corresponds a solid partition $\pi$ of size $n$. 
Denote by $Z_\pi, \mathsf{V}_\pi$ the vertex terms $Z_\alpha, \mathsf{V}_\alpha$ in (\ref{eqn: Z alpha}),(\ref{eqn: full vertex}). By $\TT$-equivariant Serre duality, we know that $\mathsf{V}_\pi$ admits a square root. We set
\[\mathsf{v}^i_\pi=Z_\pi-\overline{P_{jkl}}Z_\pi\overline{Z_\pi}, \]
which enjoys 
\[\mathsf{V}_\pi=\mathsf{v}_\pi^i+ \overline{\mathsf{v}_\pi^i}, \]
where $\set{i,j,k,l}=\set{1,2,3,4}$. For $i=4$, it recovers the square root already found in \cite{NP_colors, CKM_K_theoretic}. The next lemma was already  proven by Nekrasov-Piazzalunga  \cite{NP_colors}, whose proof we sketch for completeness and to introduce useful notation.
\begin{lemma}[{\cite[Sec. 2.4.1]{NP_colors}}]\label{lemma: vertex points T-movable}
Let $\pi$ be a point-like solid partition and $i=1,\dots 4$. Then  $\mathsf{v}^i_\pi$ is $\TT$-movable. 
\end{lemma}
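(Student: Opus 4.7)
By the symmetry permuting coordinate directions, it suffices to treat the case $i=4$. The virtual $\TT$-representation $\mathsf{v}^4_\pi$ is $\TT$-movable precisely when its trivial-character component $(\mathsf{v}^4_\pi)^\TT$ vanishes: equivalently, the Laurent polynomial $\mathsf{v}^4_\pi$ contains no monomial $t_1^m t_2^m t_3^m t_4^m$ with nonzero coefficient, modulo the Calabi--Yau relation $t_1t_2t_3t_4=1$.

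First I would check directly the square-root property $\mathsf{v}^4_\pi+\overline{\mathsf{v}^4_\pi}=\mathsf{V}_\pi$ in $K^0_\TT(\pt)$; it follows from $\overline{P_{123}} = -t_4 P_{123}$ (a consequence of the CY relation) and $(1-t_4)P_{123}=P_{1234}$. Since the trivial $\TT$-character is self-dual, $(\overline V)^\TT = V^\TT$ for any virtual representation $V$, hence $(\mathsf{V}_\pi)^\TT = 2(\mathsf{v}^4_\pi)^\TT$. This reduces the lemma to proving $(\mathsf{V}_\pi)^\TT = 0$.

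Expanding $\mathsf{V}_\pi = Z_\pi + \overline{Z_\pi} - \overline{P_{1234}} Z_\pi \overline{Z_\pi}$ and using $\overline{P_{1234}}=P_{1234}$ on $\TT$, this reduces to the combinatorial identity
\[
\bigl[P_{1234}\, Z_\pi\,\overline{Z_\pi}\bigr]^\TT = 2|D|,
\]
where $|D|=|\pi\cap\BZ\cdot(1,1,1,1)|$ is the number of diagonal boxes of $\pi$. A direct computation shows that for a monomial $t^d$, the coefficient $[P_{1234}\,t^d]^\TT$ equals $2$ when $d$ is a constant diagonal vector, equals $(-1)^k$ when $d$ has entries in $\{m-1,m\}$ for some $m$ with exactly $k$ entries at $m-1$, and vanishes otherwise. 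The identity therefore becomes a signed count of ``flat'' pairs $(p,q)\in\pi^2$ with $p-q$ of this form.

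The heart of the argument is a sign-reversing involution on the non-constant flat pairs: for $(p,q)$ with some index $i$ where $d_i := p_i - q_i$ equals the larger value of its two possible values, replace $p$ by $p-e_i$ (or equivalently $q$ by $q+e_i$), choosing whichever shift remains in $\pi$. Downward closure of $\pi$ guarantees at least one side lies in $\pi$ throughout the ``bulk'', and the sign $(-1)^k$ flips. The fixed points are exactly the constant pairs $(p,p)$ with $p$ on the main diagonal, each contributing $+2$, totaling $2|D|$. The main obstacle is the careful analysis at the boundary of $\pi$, where both shift candidates may fail simultaneously; there an inclusion--exclusion across coordinate directions is required to show that unmatched contributions cancel in groups. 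The solid partition structure (downward closure in $\BZ^4_{\geq 0}$) is indispensable: the identity is false for arbitrary finite subsets of $\BZ^4_{\geq 0}$.
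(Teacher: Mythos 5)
Your reduction is valid and worth keeping: since the trivial character is self-dual, $(\overline{V})^{\fix}=(V)^{\fix}$ for any virtual representation, so $(\mathsf{V}_\pi)^{\fix}=2(\mathsf{v}^4_\pi)^{\fix}$ and the lemma is equivalent to $(\mathsf{V}_\pi)^{\fix}=0$, i.e.\ (using $\overline{P_{1234}}=P_{1234}$ and $(Z_\pi)^{\fix}=(\overline{Z_\pi})^{\fix}=|D|$ on $\TT$) to the identity $\bigl(P_{1234}Z_\pi\overline{Z_\pi}\bigr)^{\fix}=2|D|$. Your computation of $\bigl(P_{1234}t^d\bigr)^{\fix}$ is also correct. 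This is a genuinely different framing from the paper, which never isolates $\mathsf{V}_\pi$ but proves the movability of $\mathsf{v}^4_\pi$ directly by induction on $|\pi|$: a box $t^\mu$ is added, $\widetilde\pi$ is split as $\pi'\sqcup\pi''$ according to whether a box is $\leq\mu$, and the fixed ranks of the resulting five summands are computed explicitly and shown to cancel.

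The gap is in the step that carries all the content: the sign-reversing involution is not actually constructed. As stated, the map is not an involution --- after replacing $p$ by $p-e_i$ at an index where $d_i$ takes the larger value, that index now takes the smaller value, so your selection rule would not pick it again to undo the move; you also give no rule for choosing $i$ when several indices qualify, nor for choosing between shifting $p$ down and shifting $q$ up when both stay in $\pi$ (different choices give different maps, and the composite need not be the identity). Finally, the fixed-point analysis and the boundary cancellation, which you yourself flag as ``the main obstacle'' requiring an unspecified inclusion--exclusion, is exactly where downward closure of $\pi$ must enter; without it the identity is false, as you note, so this cannot be waved away. To close the gap with the least work, prove your identity $\bigl(P_{1234}Z_\pi\overline{Z_\pi}\bigr)^{\fix}=2|D|$ by the paper's box-adding induction: the increment is $\bigl(P_{1234}(1+t^\mu\overline{Z_{\widetilde\pi}}+t^{-\mu}Z_{\widetilde\pi})\bigr)^{\fix}-2$ (after symmetrizing), and the decomposition $Z_{\widetilde\pi}=Z_{\pi'}+Z_{\pi''}$ with $\pi'=\set{\nu\in\widetilde\pi:\nu\leq\mu}$ a full box $[0,\mu_1]\times\cdots\times[0,\mu_4]$ reduces everything to explicit $\delta$-sums, yielding exactly $2\delta_{\mu_1,\mu_2,\mu_3,\mu_4}$, which matches the change in $2|D|$.
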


\begin{proof}
Without loss of generality, suppose that $i=4$. We prove the statement by induction on the size of $\pi$. If $|\pi|=1$, then $ \mathsf{v}_\pi^4$ has no constant terms. Suppose now that the claim holds for all solid partitions $\pi$ of size $|\pi|\leq n$. Consider a solid partition $\Tilde{\pi}$ of size $|\Tilde{\pi}|=n+1$; this can be seen as a solid partition $\pi$ of size $n$ with an extra box over it, corresponding to a $\BZ^4$-lattice point $\mu=(\mu_1,\mu_2,\mu_3,\mu_4)$. We have
\[
Z_{\Tilde{\pi}}=Z_{\pi}+ t^\mu
\]
and
\begin{align*}
\mathsf{v}^4_{\Tilde{\pi}}&=\mathsf{v}^4_\pi+ t^\mu-\overline{P_{123}}(t^\mu\overline{Z_\pi}+ t^{-\mu}Z_\pi+1)\\
&=\mathsf{v}^4_\pi+t^\mu-\overline{P_{123}}(t^\mu(\overline{Z_\pi}+ t^{-\mu})+ t^{-\mu}(Z_\pi+t^\mu)-1).
\end{align*}
By induction, we know that $\mathsf{v}^4_\pi$ is $\TT$-movable. Consider the subdivision $\Tilde{\pi}=\pi^{'}\sqcup \pi^{''}$ of the boxes of $\Tilde{\pi}$, where $\pi^{'}$ corresponds to the lattice points $\nu$ such that  $\nu\leq \mu$ and $\pi^{''}$ corresponds to the lattice points $\nu$ such that  $\nu \nleq \mu$. Here, given $\nu=(\nu_1, \nu_2, \nu_3, \nu_4),$ we say that  $\nu\leq \mu$ if $\nu_i\leq \mu_i$ for all $i=1,\dots, 4$.
Denote by
\begin{align*}
    Z_{\pi^{'}}&:=\sum_{\nu\in \pi^{'}}t^\nu=\sum_{i\leq \mu_1, j\leq \mu_2, k\leq \mu_3, l\leq \mu_4} t_1^it_2^jt_3^kt_4^l,\\
    Z_{\pi^{''}}&:= \sum_{\nu\in \pi^{''}}t^\nu.
\end{align*}
By construction, $Z_{\Tilde{\pi}}= Z_{\pi^{'}}+ Z_{\pi^{''}} $. We want to prove that 
\[
\left(t^\mu+ \overline{P_{123}}- \overline{P_{123}}t^\mu \overline{Z_{\pi^{'}}}-\overline{P_{123}}t^\mu \overline{Z_{\pi^{''}}}-\overline{P_{123}}t^{-\mu}Z_{\pi^{'}}-\overline{P_{123}}t^{-\mu}Z_{\pi^{''}}\right)^{\fix}=0,
\]
where by $(\cdot)^{\fix}$ we denote the invariant factors under the $\TT$-action\footnote{ In other words, for a virtual $\TT$-representation $F\in K^0_\TT(\pt)$, $(F)^{\fix}$ is its constant term when viewing $F$ as a Laurent polynomial in the torus coordinates.}.
For a set of indices $I$, denote by $\delta_{I}$ the function which is 1 if only if all the indices are equal.  The contribution of each summand is
\begin{align*}
\left(t^\mu+ \overline{P_{123}} \right)^{\fix}&=1+\delta_{\mu_1,\mu_2,\mu_3,\mu_4},\\
\left(\overline{P_{123}}t^\mu \overline{Z_{\pi^{''}}}  \right)^{\fix}&=0,\\
\left(\overline{P_{123}}t^{-\mu}Z_{\pi^{''}}  \right)^{\fix}&=0, \\
\left(\overline{P_{123}}t^\mu \overline{Z_{\pi^{'}}}  \right)^{\fix}&=\sum_{i=0}^{\mu_4} \delta_{\mu_1,\mu_2,\mu_3,i},\\
\left(\overline{P_{123}}t^{-\mu}Z_{\pi^{'}} \right)^{\fix}&=1-\sum_{i=0}^{\mu_4-1} \delta_{\mu_1,\mu_2,\mu_3,i},
\end{align*}
by which we conclude the induction step.
\end{proof}
We propose a sign rule\footnote{After a first draft of the paper was written, Kool-Rennemo \cite{KR_magnificient} announced a proof for this sign rule.} for the sign in (\ref{eqn: sign with alphabeta}), relative to the square root $\mathsf{v}_\pi^i$.
\begin{conjecture}
Let $\pi$ be a  solid partition corresponding to a $\TT$-fixed point in $\Hilb^{n}(\BC^4) $. Then the sign relative to the square root $ \mathsf{v}_\pi^i$ is $(-1)^{\sigma_i(\pi)}$, where
\begin{align}\label{eqn: sign rule points}
    \sigma_i(\pi)=|\pi|+\#\set{(a_1,a_2,a_3,a_4)\in \pi\colon a_j=a_k=a_l<a_i}
\end{align}
and $\set{i,j,k,l}= \set{1,2,3,4}$.
\end{conjecture}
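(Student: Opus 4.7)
The plan is to show that the orientation sign at the fixed point $Z_\pi$ appearing in the Oh-Thomas virtual localization formula, computed using the square root $\mathsf{v}^i_\pi$, equals $(-1)^{\sigma_i(\pi)}$. Since the Borisov-Joyce orientation of $\Hilb^n(\BC^4)$ is defined only up to a global sign on each connected component, the content of the conjecture is twofold: first, the \emph{relative} signs between distinct fixed points in a given $\Hilb^n(\BC^4)$ must agree with the relative differences of $\sigma_i$; second, a global normalisation must be fixed so that the overall sign is consistent across all $n$. My strategy is induction on $|\pi|$, mirroring the inductive structure used in the proof of Lemma \ref{lemma: vertex points T-movable}.

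For the base case $|\pi|=0$ both sides yield $+1$ under any normalisation. For the inductive step, adjoining a box at lattice point $\mu = (\mu_1,\mu_2,\mu_3,\mu_4)$ to a solid partition $\pi$ to form $\tilde\pi$, the combinatorial identity
\[
\sigma_i(\tilde\pi) - \sigma_i(\pi) \equiv 1 + \epsilon_i(\mu) \pmod{2},
\]
where $\epsilon_i(\mu)=1$ if $\mu_j=\mu_k=\mu_l<\mu_i$ (with $\{i,j,k,l\}=\{1,2,3,4\}$) and $\epsilon_i(\mu)=0$ otherwise, is immediate from the definition. The geometric side must be shown to produce the same parity. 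The crucial input is the explicit $K$-theoretic identity
\[
\mathsf{v}^i_{\tilde\pi} = \mathsf{v}^i_\pi + t^\mu - \overline{P_{jkl}}\bigl(t^\mu \overline{Z_\pi} + t^{-\mu} Z_\pi + 1\bigr)
\]
extracted from the proof of Lemma \ref{lemma: vertex points T-movable}. The sign contribution of the correction term should be readable from a quadratic refinement of the Serre pairing restricted to the direction connecting $Z_\pi$ and $Z_{\tilde\pi}$, and one must verify that its parity matches $1+\epsilon_i(\mu)$ exactly.

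\emph{The main obstacle} is making the Borisov-Joyce orientation explicit enough to extract a closed-form sign at each fixed point. Orientations are defined abstractly as square roots of the Serre quadratic form on $\det \RR\hom_\pi(\CI,\CI)$ and do not come with a built-in computational recipe. Two promising routes are the ADHM-type model of \cite{BFTZ_ADHM8D}, which realises $\Hilb^n(\BC^4)$ as (an open locus in) a critical locus inside a smooth GIT quotient and thereby trivialises $\det\RR\hom_\pi$ explicitly, and the matrix-factorisation approach announced by Kool-Rennemo \cite{KR_magnificient} after the first draft of this paper was written. As a non-trivial consistency check, the signed sum $\sum_{|\pi|=n} (-1)^{\sigma_i(\pi)} e^\TT(-\mathsf{v}^i_\pi)$ must reproduce the Nekrasov-Piazzalunga magnificent four series, a check that has been verified to high order in \cite{NP_colors, CKM_K_theoretic} using precisely the conjectured sign rule.
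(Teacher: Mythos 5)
First, a point of framing: the statement you were asked to prove is presented in the paper as a \emph{conjecture}, and the paper contains no proof of it. The only things the paper actually proves about the sign rule \eqref{eqn: sign rule points} are consistency results --- chiefly Theorem \ref{thm: orientation points}, which shows the rule is \emph{canonical}, i.e.\ that $(-1)^{\sigma_i(\pi)}e^\TT(-\mathsf{v}^i_\pi)$ does not depend on the choice of preferred axis $i$ --- together with the remark that an actual proof was only announced later by Kool--Rennemo \cite{KR_magnificient}. So the relevant question is whether your argument would establish the conjecture on its own, and it would not. The combinatorial half of your induction is fine: the parity identity $\sigma_i(\tilde\pi)-\sigma_i(\pi)\equiv 1+\epsilon_i(\mu)\pmod 2$ is exactly the bookkeeping that appears (for the \emph{difference} $\sigma_4-\sigma_3$) in the proof of Theorem \ref{thm: orientation points}, and the $K$-theoretic recursion for $\mathsf{v}^i_{\tilde\pi}$ is correctly extracted from the proof of Lemma \ref{lemma: vertex points T-movable}. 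The gap is entirely on the geometric side: you assert that the change in the localization sign ``should be readable from a quadratic refinement of the Serre pairing restricted to the direction connecting $Z_\pi$ and $Z_{\tilde\pi}$,'' but no such direction exists --- $Z_\pi$ and $Z_{\tilde\pi}$ lie in $\Hilb^n(\BC^4)$ and $\Hilb^{n+1}(\BC^4)$ respectively, which are different spaces carrying \emph{independent} orientation ambiguities. An induction on $|\pi|$ therefore cannot even be formulated at the level of orientations without an additional mechanism relating the orientations for different $n$ (a Hecke/wall-crossing comparison, or a uniform global model such as the critical-locus or matrix-factorization descriptions you cite). You correctly name this as ``the main obstacle,'' but naming it and deferring it to \cite{BFTZ_ADHM8D} and \cite{KR_magnificient} is not overcoming it; the entire content of the conjecture lives in that step.

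Two smaller issues. Even within a fixed $n$, establishing that the \emph{relative} signs between two fixed points match the relative values of $(-1)^{\sigma_i}$ requires tracking the chosen square root of the Serre form along a path in $\Hilb^n(\BC^4)$ connecting them, which your proposal does not address at all; this is not subsumed by the box-adding induction. And the final ``consistency check'' against the magnificent four series of \cite{NP_colors}, verified to finite order in \cite{CKM_K_theoretic}, is evidence rather than proof --- which is precisely the status the paper itself assigns to the statement. In short: your proposal is a reasonable research plan that correctly isolates where the difficulty lies, but as a proof it is circular at the decisive point, and it should not be presented as settling a statement that the source paper deliberately leaves as a conjecture.
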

\begin{remark}
For $i=4$, the sign rule (\ref{eqn: sign rule points}) was proposed by Nekrasov-Piazzalunga in the physics literature in \cite{NP_colors}, as the result of supersymmetric localization in string theory\footnote{In \cite{NP_colors} the sign rule is slightly different due to a slightly different choice of square root.  }. It was also proposed in \cite{CKM_K_theoretic}, based on explicit low order computations. This sign rule is consistent with the previous computations of \cite{CK1, CK_higher_counting,CK_DT-PT,CKM_K_theoretic,Nek_magnificient_4, NP_colors}.
\end{remark}
\begin{remark}\label{rem: dim red points}
Let $\pi$ corresponds to a $\TT$-invariant closed subscheme $Z\subset \BC^4$ supported in the hyperplane $\set{x_i=0}\subset \BC^4$, for $i=1,\dots, 4$. Then $\sigma_i(\pi)=|\pi|$, which is consistent with the dimensional reduction studied in \cite[Sec. 2.1]{CKM_K_theoretic}. 
\end{remark}
We prove now that the sign rule (\ref{eqn: sign rule points}) is canonical, meaning that it does not really depend on choosing a preferred $x_i$-axis, as one should expect from the correct sign rule.
\begin{theorem}\label{thm: orientation points}
Let $\pi$ a point-like solid partition. For every $i,j=1,\dots, 4$ we have
\[(-1)^{\sigma_i(\pi)}e^\TT(-\mathsf{v}_\pi^i)=(-1)^{\sigma_j(\pi)}e^\TT(-\mathsf{v}_\pi^j).\]
\end{theorem}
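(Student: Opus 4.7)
My plan is to reduce the statement, via $S_4$-symmetry permuting the torus coordinates (which simultaneously permutes the square roots $\mathsf{v}_\pi^i$ and the counts $\sigma_i(\pi)$), to the single comparison $(i,j) = (4,3)$, and then to prove the pairwise identity by induction on $|\pi|$.

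Set $W := \overline{P_{124}} - \overline{P_{123}} = \overline{P_{12}}(t_3^{-1} - t_4^{-1})$, so that $\Delta := \mathsf{v}_\pi^4 - \mathsf{v}_\pi^3 = W \cdot Z_\pi \overline{Z_\pi}$. A direct computation using $t_1 t_2 t_3 t_4 = 1$ yields $\overline{W} = -W$, hence $\overline{\Delta} = -\Delta$: each weight of $\Delta$ is paired with its opposite with opposite multiplicity, and no trivial weight appears. Since each such pair $\{t^\nu, t^{-\nu}\}$ with multiplicities $\{m, -m\}$ contributes a factor $(-1)^{|m|}$ to $e^\TT(-\Delta)$, we obtain
\[
\frac{e^\TT(-\mathsf{v}_\pi^4)}{e^\TT(-\mathsf{v}_\pi^3)} = e^\TT(-\Delta) = (-1)^{N(\pi)},
\]
where $N(\pi) := \sum_{\{\nu,-\nu\}} |m_\nu(\Delta)|$ sums over one representative per opposite pair. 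Thus the theorem reduces to the combinatorial congruence $N(\pi) \equiv \sigma_4(\pi) - \sigma_3(\pi) \pmod 2$.

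I would prove this by induction on $|\pi|$, the base case $|\pi|=0$ being trivial. For the inductive step, write $\pi = \tilde\pi \sqcup \{\mu\}$ and use
\[
\Delta_\pi - \Delta_{\tilde\pi} = W\bigl(t^\mu\overline{Z_{\tilde\pi}} + t^{-\mu}Z_{\tilde\pi} + 1\bigr).
\]
Since $\mu \notin \tilde\pi$, the parenthesised factor is self-dual with no trivial weight, so the whole expression is again anti-self-dual. Only the new box $\mu$ affects the counts $\sigma_i$, giving
\[
\bigl(\sigma_4(\pi) - \sigma_3(\pi)\bigr) - \bigl(\sigma_4(\tilde\pi) - \sigma_3(\tilde\pi)\bigr) \equiv [\mu_1 = \mu_2 = \mu_3 < \mu_4] + [\mu_1 = \mu_2 = \mu_4 < \mu_3] \pmod 2,
\]
where $[\,\cdot\,]$ denotes the Iverson bracket (note that the two indicators cannot both be $1$, so their sum mod $2$ coincides with their signed difference). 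The induction step thus reduces to the identity
\[
e^\TT\bigl(-W(t^\mu \overline{Z_{\tilde\pi}} + t^{-\mu} Z_{\tilde\pi} + 1)\bigr) = (-1)^{[\mu_1=\mu_2=\mu_3<\mu_4] + [\mu_1=\mu_2=\mu_4<\mu_3]}.
\]

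The main obstacle is this last identity: its striking feature is that the right-hand side depends only on $\mu$ and not on $\tilde\pi$. The plan is to expand $W$ into its eight monomials via the Calabi-Yau relation, multiply against $t^{\mu}\overline{Z_{\tilde\pi}}$, $t^{-\mu} Z_{\tilde\pi}$ and $1$, and verify a cancellation pattern parallel to that in Lemma \ref{lemma: vertex points T-movable}. Specifically, I would decompose $\tilde\pi = \tilde\pi' \sqcup \tilde\pi''$ with $\tilde\pi' = \{\nu \in \tilde\pi : \nu \leq \mu\}$ (as in the proof of that lemma) and show that contributions from $\tilde\pi''$ pair off with even total parity, while contributions from $\tilde\pi'$ collapse to a finite residue whose sign is governed exactly by the two diagonal indicators on the right. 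This bookkeeping is delicate, but one expects it to be tractable precisely because it refines the parity analysis already performed for $\TT$-movability.
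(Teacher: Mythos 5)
Your reduction is sound, and it is in fact the same skeleton as the paper's own argument: the paper also restricts to $(i,j)=(4,3)$ without loss of generality, observes that $\mathsf{v}_\pi^4-\mathsf{v}_\pi^3$ can be written as $U_\pi-\overline{U_\pi}$ (your anti-self-duality of $\Delta=W\,Z_\pi\overline{Z_\pi}$, via $\overline{W}=-W$ from $t_1t_2t_3t_4=1$, is exactly this observation), and then inducts on $|\pi|$ so that everything hinges on the increment produced by the added box $\mu$. The problem is that you stop precisely where the content of the theorem lives. The sentence ``this bookkeeping is delicate, but one expects it to be tractable'' is not a proof of the identity
\[
e^\TT\bigl(-W(t^\mu \overline{Z_{\tilde\pi}} + t^{-\mu} Z_{\tilde\pi} + 1)\bigr) = (-1)^{[\mu_1=\mu_2=\mu_3<\mu_4] + [\mu_1=\mu_2=\mu_4<\mu_3]},
\]
and without it the induction does not close. (A minor imprecision along the way: the factor $t^\mu\overline{Z_{\tilde\pi}}+t^{-\mu}Z_{\tilde\pi}+1$ \emph{does} contain the trivial weight, namely the $+1$; what actually guarantees that no trivial weight survives in the product is the anti-self-duality of the product itself, equivalently the $\TT$-movability statement of Lemma \ref{lemma: vertex points T-movable}.)

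For the record, the computation you defer is exactly how the paper finishes, so your plan is viable but must be executed. With $\pi=\tilde\pi\cup\{\mu\}$, the increment is rewritten as $-W+(W_1+\overline{W_1})(W_2-\overline{W_2})$ with $W_1=t^{-\mu}Z_{\pi}$ and $W_2=t_3^{-1}\overline{P_{12}}$ (so that $\overline{W_2}=t_4^{-1}\overline{P_{12}}$ under the Calabi--Yau relation); the $-W$ term contributes a trivial sign, and since $\rk W_1W_2=\rk \overline{W_1}W_2=0$ the remaining sign is $(-1)^{\rk(W_1W_2)^{\fix}+\rk(\overline{W_1}W_2)^{\fix}}$. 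Decomposing $\pi=\pi'\sqcup\pi''$ with $\pi'=\{\nu\in\pi:\nu\le\mu\}$, the $\pi''$ part contributes nothing to the fixed parts, while the full box $\pi'$ yields
\[
\rk(W_1W_2)^{\fix}=\sum_{k=0}^{\mu_3}\sum_{l=0}^{\mu_4-1}\delta_{\mu_1,\mu_2,k,l},\qquad
\rk(\overline{W_1}W_2)^{\fix}=\sum_{k=0}^{\mu_3-1}\sum_{l=0}^{\mu_4}\delta_{\mu_1,\mu_2,k,l},
\]
whose sum modulo $2$ collapses to $\sum_{l=0}^{\mu_4-1}\delta_{\mu_1,\mu_2,\mu_3,l}+\sum_{k=0}^{\mu_3-1}\delta_{\mu_1,\mu_2,k,\mu_4}$, i.e.\ precisely your two diagonal indicators. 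Until this (or an equivalent) calculation is written out, the proposal remains an outline rather than a proof.
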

\begin{proof}
$\mathsf{v}_\pi^i$ and $\mathsf{v}_\pi^j$ are both square roots of $\mathsf{V}_\pi$ and are $\TT$-movable by Lemma \ref{lemma: vertex points T-movable}. To prove the claim,  we need to write
\[ \mathsf{v}_\pi^i-\mathsf{v}_\pi^j=U_\pi-\overline{U_\pi},\]
for a $U_\pi\in K_\TT^0(\pt)$ and compute the parity of $\rk U_\pi^{\mov}$; in fact
\[
\frac{e^{\TT}(\mathsf{v}_\pi^i)}{e^{\TT}(\mathsf{v}_\pi^j)}=\frac{e^{\TT}(U^{\mov}_\pi)}{e^{\TT}(\overline{U^{\mov}_\pi})}=(-1)^{\rk U_\pi^{\mov} }.
\]
Without loss of generality, suppose $i=4$ and $j=3$;  we prove the claim  by induction on the size of $\pi$. If $|\pi|=1$, we have $Z_\pi=1$ and
\begin{align*}
    \mathsf{v}_\pi^4-\mathsf{v}_\pi^3&=\overline{P_{12}}(t_3^{-1}-t_4^{-1})\\
    &= \overline{P_{12}}t_3^{-1}- \overline{\overline{P_{12}}t_3^{-1}},
\end{align*}
where used that $t_1t_2t_3t_4=1$; clearly $\rk(\overline{P_{12}}t_3^{-1} )^{\mov}=0$. Suppose now the claim holds for any partition of size $|\pi|\leq n$ and consider a solid partition $\Tilde{\pi}$,  obtained by a solid partition $\pi$ of size $n$ by adding a box whose lattice coordinates are $\mu=(\mu_1,\mu_2,\mu_3,\mu_4)$. We have
  \begin{align*}
      \mathsf{v}_{\Tilde{\pi}}^4-\mathsf{v}_{\Tilde{\pi}}^3&=\mathsf{v}_\pi^4-\mathsf{v}_\pi^3+\overline{P_{12}}(t_3^{-1}-t_4^{-1})+\overline{P_{12}}(t_3^{-1}-t_4^{-1})(\overline{Z_\pi}t^\mu+ Z_\pi t^{-\mu})\\
      &=\mathsf{v}_\pi^4-\mathsf{v}_\pi^3-\overline{P_{12}}(t_3^{-1}-t_4^{-1})+\overline{P_{12}}(t_3^{-1}-t_4^{-1})(\overline{Z_{\Tilde{\pi}}}t^\mu+ Z_{\Tilde{\pi}} t^{-\mu}),
  \end{align*}
  and by the induction step
  \begin{align*}
      &e^\TT(\mathsf{v}_\pi^4-\mathsf{v}_\pi^3)=(-1)^{\sigma_4(\pi)-\sigma_3(\pi)},\\
      &e^\TT\left(\overline{P_{12}}(t_3^{-1}-t_4^{-1})\right) =1.
  \end{align*}
  The final piece to compute is of the form
  \begin{align*}
  \overline{P_{12}}(t_3^{-1}-t_4^{-1})(\overline{Z_{\Tilde{\pi}}}t^\mu+ Z_{\Tilde{\pi}} t^{-\mu})&=(W_1+\overline{W_1})(W_2-\overline{W_2})\\
  &=(W_1W_2-\overline{W_1}\overline{W_2})+ (\overline{W_1}W_2-W_1\overline{W_2}),
  \end{align*}
with 
\begin{align*}
    W_1&=t^{-\mu}Z_{\Tilde{\pi}},\\
    W_2&=t_3^{-1}\overline{P_{12}}.
\end{align*}
Since $\rk W_1W_2=\rk \overline{W_1}W_2=0$, we just need to compute the parity of $\rk(W_1W_2)^{\fix}+\rk(\overline{W_1}W_2)^{\fix}$.
As in the proof of Lemma \ref{lemma: vertex points T-movable}, consider the subdivision $\Tilde{\pi}=\pi^{'}\sqcup\pi^{''}$, with  $Z_{\Tilde{\pi}}= Z_{\pi^{'}}+ Z_{\pi^{''}} $, where 
\begin{align*}
    Z_{\pi^{'}}&=\sum_{i\leq \mu_1, j\leq \mu_2, k\leq \mu_3, l\leq \mu_4} t_1^it_2^jt_3^kt_4^l,\\
    Z_{\pi^{''}}&= \sum_{\nu\in \pi^{''}}t^\nu.
\end{align*}
We have
\begin{align*}
    \left(W_1W_2\right)^{\fix}&=\left(t_3^{-1}\overline{P_{12}}t^{-\mu} Z_{\pi^{'}}+t_3^{-1}\overline{P_{12}}t^{-\mu} Z_{\pi^{''}} \right)^{\fix}.
\end{align*}
As in the proof Lemma  \ref{lemma: vertex points T-movable} we compute
\begin{align*}
    \left( t_3^{-1}\overline{P_{12}}t^{-\mu} Z_{\pi^{''}}\right)^{\fix}&=0,\\
     \left( t_3^{-1}\overline{P_{12}}t^{-\mu} Z_{\pi^{'}}\right)^{\fix}&=\sum_{k=0}^{\mu_3}\sum_{l=0}^{\mu_4-1}\delta_{\mu_1,\mu_2,k,l}.
\end{align*}
Notice now that 
\[
\overline{W}_2=t_4^{-1}\overline{P_{12}},
\]
thus, by symmetry, we have
\[ \left(\overline{W_1}W_2 \right)^{\fix}= \sum_{k=0}^{\mu_3-1}\sum_{l=0}^{\mu_4}\delta_{\mu_1,\mu_2,k,l}.\]
We compute the parity
\begin{align*}
    \rk (W_1W_2)^{\fix}+ \rk(\overline{W_1}W_2)^{\fix}&=\sum_{k=0}^{\mu_3}\sum_{l=0}^{\mu_4-1}\delta_{\mu_1,\mu_2,k,l}+ \sum_{k=0}^{\mu_3-1}\sum_{l=0}^{\mu_4}\delta_{\mu_1,\mu_2,k,l}\\
    &= \sum_{l=0}^{\mu_4-1}\delta_{\mu_1,\mu_2,\mu_3,l}+ \sum_{k=0}^{\mu_3-1}\delta_{\mu_1,\mu_2,k,\mu_4}\mod 2.
\end{align*}
 Notice that 
\[
\sum_{l=0}^{\mu_4-1}\delta_{\mu_1,\mu_2,\mu_3,l}=\begin{cases} 1& \mu_4>\mu_1=\mu_2=\mu_3\\
0& \mbox{else}
\end{cases}
\]
therefore $\sigma_4(\Tilde{\pi})=\sigma_4(\pi)+ \sum_{l=0}^{\mu_4-1}\delta_{\mu_1,\mu_2,\mu_3,l}$ and  $\sigma_3(\Tilde{\pi})=\sigma_3(\pi)+ \sum_{k=0}^{\mu_3-1}\delta_{\mu_1,\mu_2,k,\mu_4}$, by which we conclude the proof.
\end{proof}
\subsection{The vertex term: curves}\label{sec: vertex curves}
Set $\underline{\lambda}=(\lambda_1,\lambda_2,\lambda_3,\lambda_4)$, where $\lambda_i$ are finite plane partitions. Denote by $P_{\underline{\lambda}}$ the collection of (possibly infinite) solid partitions, whose asymptotic profile is given by $\underline{\lambda}$. To any solid partition $\pi\in P_{\underline{\lambda}}$ corresponds a $\TT$-invariant closed subscheme $Z\subset \BC^4$;  denote  by $Z_{\pi}, Z_{\lambda_i}, \mathsf{V}_\pi$ the vertex terms $Z_\alpha, Z_{\alpha\beta_i}, \mathsf{V}_\alpha$ in (\ref{eqn: Z alpha}), (\ref{eqn:Z alpha beta}), (\ref{eqn: full vertex}). $\TT$-equivariant Serre duality implies that $\mathsf{V}_\pi $ admits a square root; set
\begin{align*}
    \mathsf{v}^i_\pi&=  Z_\pi  - \overline{P_{jkl}} Z_\pi \overline{Z_\pi}  + \sum_{j\neq i, j=1}^4 \frac{ \mathsf{f}^i_{\lambda_j}}{1-t_j} + \frac{1}{(1-t_i)} \left( - Z_{\lambda_i} + \overline{P_{jkl}} \left( \overline{Z_\pi} Z_{\lambda_i} - Z_\pi \overline{Z_{\lambda_i}} \right) + \frac{\overline{P_{jkl}}}{1-t_i} Z_{\lambda_i} \overline{Z_{\lambda_i}} \right), \\
    \mathsf{f}^i_{\lambda_j}&=-Z_{\lambda_j}   + \overline{P_{kl}} Z_{\lambda_j} \overline{Z_{\lambda_j}},
\end{align*}
which enjoy
\begin{align*}
    \mathsf{V}_\pi&=\mathsf{v}^i_\pi+\overline{\mathsf{v}^i_\pi},
\end{align*}
where $\set{i,j,k,l}=\set{1,2,3,4}$. Setting $i=4$, we recover the explicit square root choice studied in \cite{CKM_K_theoretic}. We redistribute now the vertex terms in a different way.  Write, for $N\gg 0$
\begin{align*}
Z_\pi&=Z_{\pi^{\nor}}+ \sum_{i=1}^4  \frac{Z_{\lambda_i}}{1-t_i}t_i^{N+1},\\
Z_{\pi_i}&= Z_{\lambda_i}\sum_{n=0}^N t_i^n,\\
\frac{Z_{\lambda_i}}{1-t_i}&=Z_{\pi_i}+ \frac{Z_{\lambda_i}}{1-t_i}t_i^{N+1}.
\end{align*}
Here the (point-like) solid partition $\pi^{\nor}$  is the cut-off for $N\gg 0$ of the (possibly curve-like) solid partition $\pi$. If $\pi$ is a point-like solid partition, then simply $\pi^{\nor}=\pi $, while $\pi_i$ is simply the cut-off for $N\gg 0$ of the curve-like solid partition corresponding to the infinite leg along the $x_i $-axis containing $\pi$. Using the above expressions, we can express the vertex terms as
\begin{align}
    \mathsf{v}_\pi^i=\mathsf{v}^i_{\pi^{\nor}}- \sum_{a=1}^4\mathsf{v}^i_{\pi_a}+A^i_\pi+B^i_\pi+C^i_\pi-\overline{C^{i}_\pi},
\end{align}
where 
\begin{align*}
    A^i_\pi&=-\overline{P_{jkl}}\sum_{\substack{a\neq b\\ a,b\neq i}}\frac{Z_{\lambda_a}}{1-t_a}\frac{\overline{Z_{\lambda_b}}}{1-t^{-1}_b}(t_at_b^{-1})^{N+1}-\overline{P_{1234}}\sum_{a\neq i}\frac{Z_{\lambda_a}}{1-t_a}\frac{\overline{Z_{\lambda_i}}}{1-t^{-1}_i}(t_at_i^{-1})^{N+1},\\
    B^i_\pi&=-\overline{P_{jkl}}\sum_{a\neq i}\left( \frac{Z_{\lambda_a}}{1-t_a}t_a^{N+1}(\overline{Z_{\pi^{\nor}}}-\overline{Z_{\pi_a}})+ \frac{\overline{Z_{\lambda_a}}}{1-t^{-1}_a}t_a^{-(N+1)}(Z_{\pi^{\nor}}-Z_{\pi_a})\right)\\
    &\qquad \qquad \qquad \qquad \qquad \qquad \qquad \qquad \qquad \qquad- \overline{P_{1234}}\frac{\overline{Z_{\lambda_i}}}{1-t^{-1}_i}t_i^{-(N+1)}(Z_{\pi^{\nor}}-Z_{\pi_i}),\\
    C^i_\pi&= \overline{P_{123}}\left(Z_{\pi_i}(\overline{Z_{\pi^{\nor}}}-\overline{Z_{\pi_i}})+\sum_{a\neq i}  \frac{\overline{Z_{\lambda_a}}}{1-t^{-1}_a}t_a^{-(N+1)}Z_{\pi_i} \right),
\end{align*}
where  $\set{i,j,k,l}=\set{1,2,3,4}$. Motivated by the above expression, we define a new square root of $\mathsf{V}_\pi$
\[
{\mathsf{v}_\pi^{i}}'=\mathsf{v}_\pi^i- C^i_\pi+\overline{C^i_\pi}.
\]
\begin{remark}
It is an  equivalent problem to find the sign rule for $ {\mathsf{v}_\pi^{i}}'$ or for $\mathsf{v}_\pi^{i}$. In fact, such two sign rules will differ just by $(-1)^{\rk(C_\pi^i )^{\mov}}$, which is completely determined by the solid partition $\pi$. Moreover, if the $\TT$-invariant closed subscheme $Z\subset \BC^4$ corresponding to a solid partition $\pi $ is not supported in the $\TT$-invariant line $\set{x_j=x_k=x_l=0}$, we have that ${\mathsf{v}_\pi^{i}}'=\mathsf{v}_\pi^{i} $.
\end{remark}
\begin{lemma}
\label{lemma: vertex curves T-movable}
Let $\pi$ be a curve-like solid partition and $i=1,\dots, 4$. Then  $\mathsf{v}^i_\pi,{\mathsf{v}_\pi^{i}}'$ are $\TT$-movable. 
\end{lemma}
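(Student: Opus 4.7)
The plan is to exploit the decomposition
\[
\mathsf{v}_\pi^i = \mathsf{v}^i_{\pi^{\nor}} - \sum_{a=1}^4 \mathsf{v}^i_{\pi_a} + A^i_\pi + B^i_\pi + C^i_\pi - \overline{C^i_\pi}
\]
established just before the lemma, which reduces the curve-like case to the point-like case plus a calculation on the ``interaction'' pieces $A^i_\pi, B^i_\pi, C^i_\pi$. Although each summand on the right-hand side individually depends on the cut-off $N$, the left-hand side does not, so the $\TT$-fixed part of the full expression is $N$-independent and may be computed at any $N \gg 0$.

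First I would apply Lemma \ref{lemma: vertex points T-movable} directly to $\mathsf{v}^i_{\pi^{\nor}}$ and to each $\mathsf{v}^i_{\pi_a}$: both $\pi^{\nor}$ and $\pi_a$ are point-like solid partitions (the latter being a finite stack of $\lambda_a$-shaped plane partitions along the $x_a$-axis), so these terms are $\TT$-movable. Next, I observe that $C^i_\pi - \overline{C^i_\pi}$ has the shape $F-\overline{F}$, and hence its $\TT$-fixed part vanishes automatically, since a monomial $t^\mu$ is $\TT$-fixed if and only if $t^{-\mu}$ is, and the two contributions cancel. Consequently $\mathsf{v}^i_\pi$ and ${\mathsf{v}_\pi^{i}}' = \mathsf{v}^i_\pi - C^i_\pi + \overline{C^i_\pi}$ have identical $\TT$-fixed parts, so it suffices to verify the lemma for one of them, say ${\mathsf{v}_\pi^{i}}'$, which amounts to proving that $(A^i_\pi + B^i_\pi)^{\fix} = 0$.

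The main remaining step is this last identity, which I would attack by an explicit Laurent-series computation mirroring the inductive bookkeeping in the proof of Lemma \ref{lemma: vertex points T-movable}. Every monomial occurring in $A^i_\pi, B^i_\pi$ carries a factor of $t_a^{\pm(N+1)}$ or $(t_a t_b^{-1})^{\pm(N+1)}$ coming from the infinite legs; expanding the prefactors $1/(1-t_a^{\pm 1})$ into geometric series and using $t_1t_2t_3t_4=1$ to rewrite terms when necessary, the $\TT$-fixed part collects into sums of $\delta$-functions indexed by boxes in $\lambda_a$, $\lambda_b$ and in the complement $\pi^{\nor}\setminus \pi_a$. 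I expect the $A^i_\pi$ contributions (leg--leg interactions) to cancel pairwise against the $B^i_\pi$ contributions (leg--bulk interactions). The main obstacle is the sheer number of cross terms that must be tracked; the cleanest route is probably induction on the renormalized volume $|\pi|$ with fixed asymptotic profile $\underline{\lambda}$, where each step adds a single box to $\pi^{\nor}\setminus\pi_a$ and reduces the fixed-part computation to a local cancellation of the same flavor as the one at the heart of Lemma \ref{lemma: vertex points T-movable}.
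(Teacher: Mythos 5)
Your structural setup matches the paper's proof exactly: the paper also uses the decomposition $\mathsf{v}_\pi^i=\mathsf{v}^i_{\pi^{\nor}}- \sum_{a}\mathsf{v}^i_{\pi_a}+A^i_\pi+B^i_\pi+C^i_\pi-\overline{C^{i}_\pi}$, invokes Lemma \ref{lemma: vertex points T-movable} for the point-like pieces $\pi^{\nor}$ and $\pi_a$, and disposes of $C^i_\pi-\overline{C^i_\pi}$ because anything of the form $F-\overline{F}$ has vanishing fixed part. Up to that point you are reproducing the intended argument.

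The issue is the last step. You correctly record the decisive observation --- every monomial of $A^i_\pi$ and $B^i_\pi$ carries a factor $t_a^{\pm(N+1)}$ or $(t_at_b^{-1})^{\pm(N+1)}$ --- but then you do not use it; instead you propose an induction on $|\pi|$ in which $\delta$-function contributions from $A^i_\pi$ (leg--leg) are expected to cancel against those from $B^i_\pi$ (leg--bulk). That is not what happens, and the extra machinery is unnecessary: for $N\gg 0$ the fixed parts $(A^i_\pi)^{\fix}$ and $(B^i_\pi)^{\fix}$ each vanish separately, monomial by monomial, with no cross-cancellation. Indeed, a monomial $t^\mu$ is $\TT$-fixed iff $\mu_1=\mu_2=\mu_3=\mu_4$. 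In $B^i_\pi$ the factor $\frac{Z_{\lambda_a}}{1-t_a}t_a^{\pm(N+1)}$ forces the $t_a$-exponent to have absolute value at least $N+1-C$ while the remaining factors ($\overline{P_{jkl}}$, $\overline{Z_{\pi^{\nor}}}-\overline{Z_{\pi_a}}$, $Z_{\lambda_a}$) have exponents bounded by a constant $C$ independent of $N$, so the four exponents can never be equal once $N$ is large. In $A^i_\pi$ the factor $(t_at_b^{-1})^{N+1}$ with $a\neq b$, combined with $\frac{Z_{\lambda_a}}{1-t_a}$ (nonnegative $t_a$-exponents) and $\frac{\overline{Z_{\lambda_b}}}{1-t_b^{-1}}$ (nonpositive $t_b$-exponents), forces the $a$- and $b$-exponents to differ by at least $2(N+1)-C$, again ruling out fixed monomials. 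Replacing your proposed induction by this two-line degree count closes the gap and recovers the paper's proof.
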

\begin{proof}
By Lemma \ref{lemma: vertex points T-movable} $\mathsf{v}^i_{\pi^{\red}}$, $\mathsf{v}^i_{\pi_a}$ are $\TT$-movable, for $a=1,\dots, 4$. For $N\gg 0$, we clearly have $(A^i_\pi)^{\fix}=(B^i_\pi)^{\fix}=0$.
\end{proof}
We propose a sign rule for the sign in (\ref{eqn: sign with alphabeta}), relative to the square root ${\mathsf{v}_\pi^{i}}'$.
\begin{conjecture}\label{conj: sign vertex curves}
Let $\pi$ be a  curve-like solid partition. Then the sign relative to the square root $ {\mathsf{v}_\pi^{i}}'$ is $(-1)^{\sigma_i(\pi)}$, where
\begin{multline}\label{eqn: sign rule curves}
    \sigma_i(\pi)=|\pi|+\#\set{(a_1,a_2,a_3,a_4)\in \pi\colon a_j=a_k=a_l<a_i}\\-\sum_{\mathrm{leg}}\#\set{(a_1,a_2,a_3,a_4)\in \mathrm{leg}\colon a_j=a_k=a_l<a_i}
\end{multline}
and $\set{i,j,k,l}= \set{1,2,3,4}$, where $\mathrm{leg} $ denote the curve-like solid partitions obtained by translating the plane partitions $\lambda_i$ along the $x_i$-axis.
\end{conjecture}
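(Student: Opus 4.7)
The plan is to split the conjecture into two parts: (i) \emph{canonicity}, i.e.~the equality
\[
(-1)^{\sigma_i(\pi)} e^\TT(-{\mathsf{v}_\pi^{i}}') = (-1)^{\sigma_j(\pi)} e^\TT(-{\mathsf{v}_\pi^{j}}')
\]
for any $i,j \in \{1,\dots,4\}$, and (ii) matching a single known base case to pin down the overall sign. Step (i) is the curve-like analogue of Theorem \ref{thm: orientation points}; step (ii) reduces to the point-like case (which is already settled by the sign rule (\ref{eqn: sign rule points})) or, when $\pi$ is a pure leg, to dimensional reduction along Remark \ref{rem: dim red points}, matching the 3-fold DT vertex of Maulik--Nekrasov--Okounkov--Pandharipande.

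For step (i), I would exploit the decomposition
\[
{\mathsf{v}_\pi^{i}}' = \mathsf{v}^i_{\pi^{\nor}} - \sum_{a=1}^4 \mathsf{v}^i_{\pi_a} + A^i_\pi + B^i_\pi
\]
recorded in the excerpt, which is $\TT$-movable by Lemma \ref{lemma: vertex curves T-movable}. Fixing $i \ne j$, write
\[
{\mathsf{v}_\pi^{i}}' - {\mathsf{v}_\pi^{j}}' = U_\pi - \overline{U_\pi}
\]
for some $U_\pi \in K^0_\TT(\pt)$, so that the canonicity ratio equals $(-1)^{\rk U_\pi^{\mov}}$, and one must identify this parity with $\sigma_i(\pi) - \sigma_j(\pi) \pmod{2}$. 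The finite point-like piece $\mathsf{v}^i_{\pi^{\nor}} - \mathsf{v}^j_{\pi^{\nor}}$ is already handled by Theorem \ref{thm: orientation points}, contributing the parity $\sigma_i(\pi^{\nor}) - \sigma_j(\pi^{\nor})$. Each leg difference $\mathsf{v}^i_{\pi_a} - \mathsf{v}^j_{\pi_a}$ can be analysed by a parallel box-by-box induction on $\lambda_a$, since $\pi_a$ is a curve-like partition with a single non-trivial asymptotic; the induction is formally identical to the one in the proof of Theorem \ref{thm: orientation points}, with the box addition now taking place inside a cross-section of the infinite leg.

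The bulk of the work lies in the cross-term pieces $A^i_\pi$ and $B^i_\pi$, which are bilinear expressions in $Z_{\lambda_a}$, $Z_{\pi^{\nor}}$ and the cutoff shifts $t_a^{\pm(N+1)}$. For $N \gg 0$ these are purely $\TT$-movable, but their movable rank still contributes to the parity we are computing. Writing each such term as $F - \overline{F}$ and extracting $\rk F^{\mov} \bmod 2$ produces $\delta$-type parity counts of diagonal coincidences in the exponents, in the spirit of the final computation in the proof of Theorem \ref{thm: orientation points}. The large-$N$ factors $t_a^{\pm(N+1)}$ kill all coincidences except those forced by genuine diagonal corners $a_j=a_k=a_l$ inside the profiles $\lambda_a$; these surviving contributions are exactly what is subtracted in the leg correction $\sum_{\mathrm{leg}} \#\{a_j=a_k=a_l<a_i\}$ appearing in (\ref{eqn: sign rule curves}).

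The main obstacle will be bookkeeping: four asymptotic legs interact pairwise with each other (through $A^i_\pi$) and with the finite body (through $B^i_\pi$), so one has to organise the $\delta$-contributions by the index pattern of $(i,j)$ and the leg involved, and check that a chain of cancellations reassembles them into the single combinatorial expression $\sigma_i(\pi) - \sigma_j(\pi)$. The simplifying guiding principle is that $A^i_\pi$, $B^i_\pi$, and the $C^i_\pi - \overline{C^i_\pi}$ redistribution are built precisely so that the only surviving finite parity data come from corner coincidences, matching the combinatorial definition of $\sigma_i$. Once canonicity is secured, a single base case (for instance the trivial leg $\lambda_a = (1)$, verified against the MNOP 3-fold vertex via Remark \ref{rem: dim red points}) fixes the global sign and concludes.
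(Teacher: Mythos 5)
The statement you are trying to prove is stated in the paper as a \emph{conjecture}, and the paper does not prove it: what it proves is only Theorem \ref{thm: orientation curves}, namely that the proposed sign rule is \emph{canonical} in the sense that $(-1)^{\sigma_i(\pi)}e^\TT(-{\mathsf{v}_\pi^{i}}')$ is independent of $i$. Your step (i) is essentially that theorem (and your outline of it — using the decomposition ${\mathsf{v}_\pi^{i}}'=\mathsf{v}^i_{\pi^{\nor}}-\sum_a\mathsf{v}^i_{\pi_a}+A^i_\pi+B^i_\pi$, handling the point-like pieces via Theorem \ref{thm: orientation points} and checking that $A^i_\pi-A^j_\pi$ and $B^i_\pi-B^j_\pi$ contribute trivially for $N\gg 0$ — matches the paper's argument; note only that the cut-off legs $\pi_a$ are finite point-like partitions, so Theorem \ref{thm: orientation points} applies to them directly without a separate induction).

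The genuine gap is in step (ii). Canonicity compares different choices of square root \emph{at a fixed} $\TT$-fixed point $\pi$; it says nothing about how the orientation-dependent sign $(-1)^{\sigma(Z,\mathsf{v}_\alpha)}$ in \eqref{eqn: sign with alphabeta} varies as $\pi$ ranges over all fixed points. Verifying one base case (a pure leg against the MNOP vertex, or the point-like case) therefore pins down the sign only at that one fixed point; there is no mechanism in your argument to propagate the sign to an arbitrary curve-like $\pi$, because the sign is determined by a global choice of orientation of the moduli space and a priori could differ from the combinatorial prediction at any other fixed point. Closing this gap requires a genuinely geometric input (computing $\sqrt{e^\TT}$ relative to an actual orientation, as in the Kool--Rennemo work cited for the point-like case), not a combinatorial consistency check. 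As written, your proposal reproduces the paper's \emph{evidence} for the conjecture — canonicity plus compatibility with dimensional reduction and known computations — but does not constitute a proof of it.
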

\begin{remark}\label{rmk: dim red curves vertex}
For $i=4$ the sign rule (\ref{eqn: sign rule curves}) was proposed in \cite{CKM_K_theoretic} for at most two non-empty legs, based on explicit low order computations. This sign rule is consistent with the computations of \cite{CK_DT-PT,CKM_K_theoretic}.
\end{remark}
\begin{remark}\label{rem: dim red curves}
Let $\pi$ corresponds to a $\TT$-invariant closed subscheme $Z\subset \BC^4$ supported in the hyperplane $\set{x_i=0}\subset \BC^4$, for $i=1,\dots, 4$. Then $\sigma_i(\pi)=|\pi|$, which is consistent with the dimensional reduction studied in \cite[Sec. 2.1]{CKM_K_theoretic}. 
\end{remark}
We prove now that the sign rule (\ref{eqn: sign rule curves}) is canonical, meaning that it does not really depend on choosing a preferred $x_i$-axis.
\begin{theorem}\label{thm: orientation curves}
Let $\pi$ a curve-like solid partition. For every $i,j=1,\dots, 4$ we have
\[(-1)^{\sigma_i(\pi)}e^\TT(-{\mathsf{v}_\pi^{i}}')=(-1)^{\sigma_j(\pi)}e^\TT(-{\mathsf{v}_\pi^{j}}').\]
\end{theorem}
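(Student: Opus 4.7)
The plan is to adapt the strategy of the proof of Theorem \ref{thm: orientation points}: I will write the difference ${\mathsf{v}_\pi^{i}}' - {\mathsf{v}_\pi^{j}}' = U - \overline{U}$ for some $\TT$-movable $U \in K^0_\TT(\pt)$ (which exists by Lemma \ref{lemma: vertex curves T-movable}) and then match $\rk U^{\mov} \pmod 2$ with $\sigma_i(\pi) - \sigma_j(\pi) \pmod 2$, so that the ratio of equivariant Euler classes becomes $(-1)^{\rk U^{\mov}}$. The starting point is the decomposition introduced just before the theorem statement,
\[
{\mathsf{v}_\pi^{i}}' = \mathsf{v}^i_{\pi^{\nor}} - \sum_{a=1}^{4} \mathsf{v}^i_{\pi_a} + A^i_\pi + B^i_\pi,
\]
valid for $N \gg 0$. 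Since $\pi^{\nor}$ is point-like by definition of the renormalization and each slab $\pi_a$ is a finite (hence point-like) solid partition, Theorem \ref{thm: orientation points} applies directly to both.

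After subtracting the same identity for $j$, the first two groups of summands contribute a parity of $\bigl(\sigma_i(\pi^{\nor}) - \sigma_j(\pi^{\nor})\bigr) + \sum_{a=1}^{4} \bigl(\sigma_i(\pi_a) - \sigma_j(\pi_a)\bigr) \pmod 2$ by Theorem \ref{thm: orientation points}. The central technical step is then to prove that the residual
\[
(A^i_\pi - A^j_\pi) + (B^i_\pi - B^j_\pi) = W - \overline{W}
\]
satisfies $\rk W^{\mov} \equiv 0 \pmod 2$. Every monomial in $A^i_\pi$ and $B^i_\pi$ carries a factor $(t_a t_b^{-1})^{N+1}$ or $t_a^{\pm(N+1)}$ with $a \neq b$, so for $N \gg 0$ no fixed part appears and all weights lie in shells that grow with $N$. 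I would expand the products using the rewriting $Z_\pi = Z_{\pi^{\nor}} + \sum_m \frac{Z_{\lambda_m}}{1-t_m}t_m^{N+1}$ together with the analogous expressions for $\frac{Z_{\lambda_a}}{1-t_a}$, organize the monomials of $W$ into self-conjugate pairs through systematic index bookkeeping (much as in the proof of Lemma \ref{lemma: vertex points T-movable}), and exploit the fact that $\rk W^{\mov}$ must be $N$-independent (since the left-hand side of the theorem is) to reduce the parity check to a finite boundary count.

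Finally I would verify the purely combinatorial identity
\[
\sigma_i(\pi) - \sigma_j(\pi) \equiv \bigl(\sigma_i(\pi^{\nor}) - \sigma_j(\pi^{\nor})\bigr) + \sum_{a=1}^{4}\bigl(\sigma_i(\pi_a) - \sigma_j(\pi_a)\bigr) \pmod 2.
\]
This is precisely what the leg-correction term in \eqref{eqn: sign rule curves} is engineered to ensure: it removes the infinite corner count along the $x_i$-leg in $\pi$, leaving a finite residue that decomposes as the sum of corner counts of $\pi^{\nor}$ and of the slabs $\pi_a$, up to an $i$-independent shift which drops out of the difference. The hardest part of the whole argument will be the residual parity computation: although mechanical, the bookkeeping involves several double Laurent series, and extracting $\rk W^{\mov} \pmod 2$ cleanly requires either a systematic pairing of monomials with their conjugates or an asymptotic $N \to \infty$ argument exploiting the $N$-stability of the final parity.
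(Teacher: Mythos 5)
Your proposal follows essentially the same route as the paper's proof: decompose ${\mathsf{v}_\pi^{i}}'$ into $\mathsf{v}^i_{\pi^{\nor}}-\sum_a\mathsf{v}^i_{\pi_a}+A^i_\pi+B^i_\pi$, apply Theorem \ref{thm: orientation points} to the point-like pieces $\pi^{\nor}$ and $\pi_a$, use the combinatorial identity $\sigma_i(\pi)-\sigma_j(\pi)=\sigma_i(\pi^{\nor})-\sigma_j(\pi^{\nor})-\sum_a(\sigma_i(\pi_a)-\sigma_j(\pi_a))$, and check that the residual $A$ and $B$ differences contribute trivially (the paper asserts the slightly stronger fact that $\rk(U_{A})^{\mov}=\rk(U_{B})^{\mov}=0$ for $N\gg 0$, where your parity-only claim already suffices). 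This is correct and matches the paper's argument.
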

\begin{proof}
${\mathsf{v}_\pi^i}'$ and ${\mathsf{v}_\pi^j}'$ are both square roots of $\mathsf{V}_\pi$ and are $\TT$-movable by Lemma \ref{lemma: vertex curves T-movable}.
The difference of the two sections is
\begin{align*}
   {\mathsf{v}_\pi^i}'-{\mathsf{v}_\pi^j}'&=(\mathsf{v}^i_{\pi^{\nor}}-\mathsf{v}^j_{\pi^{\nor}} )- \sum_{a=1}^4 (\mathsf{v}^i_{\pi_a}-\mathsf{v}^j_{\pi_a} )+A^i_\pi-A^j_\pi+B^i_\pi-B^j_\pi.
\end{align*}
By Theorem \ref{thm: orientation points}, we know that 
\begin{align*}
   e^\TT(\mathsf{v}^i_{\pi^{\nor}}-\mathsf{v}^j_{\pi^{\nor}})&=(-1)^{\sigma_i(\pi^{\nor})-\sigma_j(\pi^{\nor})},\\
    e^\TT(\mathsf{v}^i_{\pi_a}-\mathsf{v}^j_{\pi_a})&=(-1)^{\sigma_i(\pi_a)-\sigma_j(\pi_a)},
\end{align*}
which satisfies 
\begin{align*}
   \sigma_i(\pi)-\sigma_j(\pi)=\sigma_i(\pi^{\nor})-\sigma_j(\pi^{\nor})-\sum_{a=1}^4(\sigma_i(\pi_a)-\sigma_j(\pi_a) ).
\end{align*}
To conclude, with a simple computation it is possible to show that
\begin{align*}
    A_\pi^i-A_\pi^j&=U_{A,i,j,\pi}-\overline{U_{A,i,j,\pi}},\\
    B_\pi^i-B_\pi^j&=U_{B,i,j,\pi}-\overline{U_{B,i,j,\pi}},  
\end{align*}
where, for $N\gg 0$, we have $\rk(U_{A,i,j,\pi})^{\mov}=\rk(U_{A,i,j,\pi})^{\fix}=0$ and $\rk(U_{B,i,j,\pi})^{\mov}=\rk(U_{B,i,j,\pi})^{\fix}=0$.
\end{proof}
\subsection{The edge term}\label{sec:edge}
In this section we study square roots and propose a sign rule for the edge term \ref{eqn: full edge term}. For simplicity, let's assume that the edge $\alpha\beta\in E(X)$ corresponds to the $\BP^1$ given, in the local coordinates of $U_\alpha$, by $\set{x_2=x_3=x_4=0}$, with normal bundle
\[  
N_{\BP^1/X}\cong \oO(m_2)\oplus\oO(m_3)\oplus\oO(m_4),
\]
satisfying $m_2+m_3+m_4=-2$; we set $\mathbf{m}=(m_2,m_3,m_4)$. We also fix  a finite-size plane partition $\lambda$, corresponding to the profile of the non-reduced $\TT$-fixed leg. The discussion for the legs along the other directions will be completely analogous. Denote here by $Z_\lambda, \mathsf{E}_{\lambda}$ the edge terms $Z_{\alpha\beta}, \mathsf{E}_{\alpha\beta}$ in (\ref{eqn:Z alpha beta}), (\ref{eqn: full edge term}), where 
\begin{align*}
    Z_{\lambda}&=\sum_{(j,k,l)\in \lambda} t_2^j t_3^k t_4^l.
\end{align*}
Denote by $\widetilde{(\cdot)}:K_\TT^0(\pt)\to K_\TT^0(\pt)$ the map sending 
\[V\mapsto V(t_1^{-1}, t_2t_1^{-m_2},t_3t_1^{-m_3},t_4t_1^{-m_4}).\]
The edge term admits a square root; set
\begin{align*}
    \mathsf{e}^j_\lambda&= t_1^{-1}\frac{   \mathsf{f}^j_{\lambda}}{1-t_1^{-1}}- \frac{ \widetilde{\mathsf{f}^j_{\lambda}}}{1-t_1^{-1}},\\
    \mathsf{f}^j_{\lambda}&=-Z_{\lambda}   + \overline{P_{kl}} Z_{\lambda} \overline{Z_{\lambda}},
\end{align*}
which enjoys
\begin{align*}
    \mathsf{E}_\lambda&=\mathsf{e}^j_\lambda+\overline{\mathsf{e}^j_\lambda},
\end{align*}
where $\set{j,k,l}=\set{2,3,4}$. Setting $j=4$, we recover the explicit square root choice studied in \cite{CKM_K_theoretic}.
\begin{lemma}
\label{lemma: edge curves T-movable}
Let $\lambda$ be a finite plane partition and $j=2,3, 4$. Then  $\mathsf{e}^j_\lambda$ is $\TT$-movable. 
\end{lemma}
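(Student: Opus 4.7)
The plan is to prove $\TT$-movability of $\mathsf{e}^j_\lambda$ by a direct monomial analysis of the explicit formula, using the Calabi-Yau relation $m_2+m_3+m_4=-2$ as the decisive ingredient. Without loss of generality take $j=4$, so $\{k,l\}=\{2,3\}$; the other cases follow by relabeling. I will proceed in the spirit of Lemma \ref{lemma: vertex points T-movable}, but the argument is more direct because $\mathsf{f}^j_\lambda$ depends only on the three variables $t_2,t_3,t_4$ (no $t_1$).

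First I would check that $\mathsf{e}^j_\lambda$ is a well-defined Laurent polynomial: since the substitution $\widetilde{(\cdot)}$ becomes the identity at $t_1=1$ (the correction factors $t_1^{-m_a}$ collapse to $1$), the numerator $t_1^{-1}\mathsf{f}^j_\lambda - \widetilde{\mathsf{f}^j_\lambda}$ vanishes at $t_1=1$, so the apparent pole cancels. Next I would expand $\mathsf{f}^j_\lambda = \sum_{(b,c,d)} a_{bcd}\, t_2^b t_3^c t_4^d$ as a finite sum of monomials, contributing to $\mathsf{e}^j_\lambda$ the terms $a_{bcd}\,t_2^b t_3^c t_4^d\cdot\frac{t_1^{-1}-t_1^{-p}}{1-t_1^{-1}}$ with $p = bm_2+cm_3+dm_4$. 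A monomial $t_1^a t_2^b t_3^c t_4^d$ is $\TT$-invariant iff $a=b=c=d$, so only the \emph{diagonal} monomials $(b,c,d)=(s,s,s)$ in $\mathsf{f}^j_\lambda$ can possibly contribute to $(\mathsf{e}^j_\lambda)^{\fix}$. By the Calabi-Yau condition these force $p = s(m_2+m_3+m_4) = -2s$, and an elementary calculation of $\frac{t_1^{-1}-t_1^{2s}}{1-t_1^{-1}}$ shows its coefficient at $t_1^s$ equals $-1$ for $s\geq 0$ and $+1$ for $s\leq -1$. Hence
\[
(\mathsf{e}^j_\lambda)^{\fix} \;=\; \sum_{s\leq -1} N_s \;-\; \sum_{s\geq 0} N_s, \qquad N_s \defeq [\mathsf{f}^j_\lambda]_{(s,s,s)}.
\]

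The final and most delicate step is the combinatorial identity $\sum_{s\leq -1}N_s = \sum_{s\geq 0}N_s$. The key input is the duality $\overline{P_{kl}} = (t_k t_l)^{-1} P_{kl}$, which gives
\[
(t_k t_l)\,\mathsf{f}^j_\lambda \,-\, \overline{\mathsf{f}^j_\lambda} \;=\; -(t_k t_l)\,Z_\lambda \,+\, \overline{Z_\lambda}.
\]
Extracting the coefficient of $t_2^{s}t_3^{s}t_4^{s}$ on each side produces a telescoping relation between $N_s$ and the shifted coefficient $[\mathsf{f}^j_\lambda]_{(-s-1,-s-1,-s)}$, together with boundary terms involving $\mathbb{1}_{(s,s,s)\in\lambda}$ and $\mathbb{1}_{(-s-1,-s-1,-s)\in\lambda}$; summing over all $s$ with the alternating sign $\epsilon_s$ (which satisfies $\epsilon_s+\epsilon_{-s-1}=0$) pairs the contributions two-by-two, and the boundary terms cancel by a direct count over the finitely many boxes of $\lambda$. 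The main obstacle is tracking these boundary corrections carefully; an alternative route, mirroring Lemma \ref{lemma: vertex points T-movable}, is to induct on $|\lambda|$: the base case $\lambda=\emptyset$ is trivial, and adding a box at position $\mu=(\mu_2,\mu_3,\mu_4)$ changes $\mathsf{f}^j_\lambda$ by $-t^\mu + \overline{P_{kl}}(t^\mu\overline{Z_\lambda}+t^{-\mu}Z_\lambda+1)$, whose associated edge contribution one can show, via the same subdivision $\widetilde\lambda=\pi'\sqcup\pi''$ technique as in Lemma \ref{lemma: vertex points T-movable}, is itself $\TT$-movable.
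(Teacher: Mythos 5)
Your reduction of the statement to an identity on diagonal coefficients is correct and is exactly how the paper begins: only the diagonal monomials $(s,s,s)$ of $\mathsf{f}^j_\lambda$ can contribute to the $\TT$-fixed part, and the Calabi--Yau relation $m_2+m_3+m_4=-2$ pins the resulting $t_1$-expansion down to a single fixed monomial with sign $-1$ for $s\ge 0$ and $+1$ for $s\le -1$, so that $(\mathsf{e}^j_\lambda)^{\fix}=\sum_{s\le -1}N_s-\sum_{s\ge 0}N_s$ with $N_s=[\mathsf{f}^j_\lambda]_{(s,s,s)}$. The gap is in your proof that this vanishes. The relation $(t_kt_l)\,\mathsf{f}^j_\lambda-\overline{\mathsf{f}^j_\lambda}=-(t_kt_l)Z_\lambda+\overline{Z_\lambda}$ is true, but extracting the coefficient of $t_2^st_3^st_4^s$ relates $N_{-s}$ to the \emph{off-diagonal} coefficient $[\mathsf{f}^j_\lambda]_{(s-1,s-1,s)}$ --- the twist is by $t_kt_l$, which shifts only two of the three coordinates --- so the relation does not close on the sequence $\{N_s\}$ and nothing telescopes. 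Moreover, no purely formal manipulation of this kind can succeed: the identity $\sum_{s\le-1}N_s=\sum_{s\ge0}N_s$ is \emph{false} for a general finite subset of $\BZ^3_{\ge0}$ in place of $\lambda$ (for the single box $(1,0,0)$, which is not a plane partition, one gets $Z_\lambda=t_2$, $N_0=1$ and $N_s=0$ otherwise), so any correct argument must genuinely use the plane-partition structure of $\lambda$.

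The paper closes this step by observing that $\mathsf{f}^j_\lambda-\overline{\mathsf{f}^j_\lambda}\,(t_2t_3t_4)^{-1}$ is, up to sign, the MNOP $3$-fold vertex of $\lambda$ in the variables $t_2,t_3,t_4$, whose movability with respect to $\TT_0=\set{t_2t_3t_4=1}$ is established in \cite{MNOP_1}; comparing diagonal coefficients yields the stronger pointwise symmetry $N_s=N_{-s-1}$, which gives the cancellation. Your fallback suggestion --- induction on $|\lambda|$ by adding a box, with a subdivision argument as in Lemma~\ref{lemma: vertex points T-movable} --- is a plausible route (it would essentially reprove the MNOP movability), but as written it is only gestured at: the required count is a three-variable computation with $\overline{P_{kl}}$ applied to a plane partition, not the four-variable count of Lemma~\ref{lemma: vertex points T-movable}, and it is not carried out. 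Either invoke the MNOP result as the paper does, or execute that induction in full.
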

\begin{proof}
Without loss of generality, suppose that $j=4$. Write $\mathsf{f}^4_{\lambda}=\sum_\nu t^\nu $, where the sum is over $\nu=(\nu_2, \nu_3, \nu_4)$ and we set  $t^\nu=t_2^{\nu_2}t_3^{\nu_3}t_4^{\nu_4} $. We have
\begin{align}\label{eqn: split with no pole}
     \frac{1}{1-t_1^{-1}}(t_1^{-1}t^{\nu}-t^{\nu}t_1^{-\mathbf{m}\nu})=\begin{cases} -t^{\nu}\sum_{i=0}^{-\mathbf{m}\nu}t_1^{i}& \mathbf{m}\nu \leq 0,\\
    0 & \mathbf{m}\nu=1,\\
    t^{\nu}t_1^{-1}\sum_{i=0}^{\mathbf{m}\nu-2}t_1^{-i}& \mathbf{m} \nu\geq 2\\
    \end{cases}
\end{align}
   where $\mathbf{m} \nu$ denotes the standard scalar product in $\BZ^3$. Therefore the contribution to the $\TT$-fixed part of each $t^{\nu}$ is
 \begin{align}\label{eqn: edge mov piece dim}
        \rk\left( \frac{1}{1-t_1^{-1}}(t_1^{-1}t^{\nu}-t^{\nu}t_1^{-\mathbf{m}\nu}) \right)^{\fix}&=\begin{cases}
     -\sum_{i=0}^{-\mathbf{m}\nu}\delta_{i,\nu_2, \nu_3,\nu_4}& \mathbf{m}\nu \leq 0,\\
     0 &  \mathbf{m}\nu=1,\\
     \sum_{i=1-\mathbf{m}\nu}^{-1}\delta_{i,\nu_2, \nu_3,\nu_4}& \mathbf{m}\nu \geq 2\\
    \end{cases}\\
    &=\begin{cases}
    -1& \nu_2=\nu_3=\nu_4\geq 0,\\
    1& \nu_2=\nu_3=\nu_4\leq  -1,\\
     0 & \mbox{else}.\\
    \end{cases}
   \end{align}
   Denote by $W_l$ the sub-representation of $\mathsf{f}_{\lambda}^4$ corresponding to the irreducible $\TT$-representation $(t_2t_3t_4)^l$, for $l\in \BZ$. Equation (\ref{eqn: edge mov piece dim}) translates into
   \[
    \rk\sum_{\nu}\left( \frac{1}{1-t_1^{-1}}(t_1^{-1}t^{\nu}-t^{\nu}t_1^{-\mathbf{m}\nu}) \right)^{\fix}=\sum_{l\geq 0}\left(\rk W_{-l-1}-\rk W_{l}\right).
   \]
   Notice that $\mathsf{f}_{\lambda}^4-\overline{\mathsf{f}_{\lambda}^4}(t_2t_3t_4)^{-1} $ is the 3-fold vertex of \cite[Eqn. (12)]{MNOP_1} in the variables $t_2,t_3,t_4$, which  is $\TT_0$-movable for $\TT_0=\set{t_2t_3t_4=1}\subset (\BC^*)^3$ (cf. \cite[pag. 1279]{MNOP_1}). This implies that for any $l\in \BZ$
   \[
   \rk W_{l}=\rk W_{-l-1},
   \]
   by which we conclude the proof.
\end{proof}
We propose a sign rule for the sign in (\ref{eqn: sign with alphabeta}), relative to the square root ${\mathsf{e}_\lambda^{i}}$.
\begin{conjecture}\label{conj: sign edge}
Let $\lambda$ be a finite plane partition. Then the sign relative to the square root $ \mathsf{e}_\lambda^i$ is $(-1)^{\sigma_i(\lambda)}$, where
\begin{equation}
    \label{eqn: sign rule edge}
    \sigma_i(\lambda)= f_{\mathbf{m}}(\lambda)+|\lambda|m_i+ \#\set{(a_2,a_3,a_4)\in \lambda\colon a_j=a_k<a_i},
\end{equation}
  and  $\set{i,j,k}= \set{2,3,4}$.
\end{conjecture}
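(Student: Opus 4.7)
The plan is to prove the conjecture via an inductive argument on $|\lambda|$, using as black-box input the recently-announced Kool-Rennemo proof of the point sign rule (footnote in Section 3.3) together with canonicality (the forthcoming Theorem \ref{thm: orientation edge}). The inductive step is a combinatorial box-counting computation modelled on the proof of Theorem \ref{thm: orientation points}; the base case requires an independent input, which I would extract from a global computation on a compact toric Calabi-Yau fourfold.

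For the inductive step, let $\lambda = \tilde{\lambda} \cup \set{\mu}$ with $\mu = (\mu_2, \mu_3, \mu_4)$ an addable box, so $Z_\lambda = Z_{\tilde{\lambda}} + t_2^{\mu_2} t_3^{\mu_3} t_4^{\mu_4}$. Expanding
\[
\mathsf{f}^i_\lambda - \mathsf{f}^i_{\tilde{\lambda}} = -t^\mu + \overline{P_{kl}}\bigl(t^\mu \overline{Z_{\tilde{\lambda}}} + t^{-\mu} Z_{\tilde{\lambda}} + 1\bigr)
\]
and applying the twist operator $\widetilde{(\cdot)}$ as in the definition of $\mathsf{e}^i_\lambda$, one writes the resulting difference as $U_{i,\mu,\tilde{\lambda}} - \overline{U_{i,\mu,\tilde{\lambda}}}$ for some $U_{i,\mu,\tilde{\lambda}} \in K^0_\TT(\pt)$, so that $e^\TT(-\mathsf{e}^i_\lambda)/e^\TT(-\mathsf{e}^i_{\tilde{\lambda}}) = (-1)^{\mathrm{rk}(U_{i,\mu,\tilde{\lambda}})^{\mathrm{mov}}}$. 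I would compute this parity using the trichotomy (\ref{eqn: split with no pole}) from Lemma \ref{lemma: edge curves T-movable}, case-analysing on the sign of $\mathbf{m}\cdot\nu - 1$ for each monomial $t^\nu$ arising in the expansion, and compare with the predicted change
\[
\sigma_i(\lambda) - \sigma_i(\tilde{\lambda}) = 1 - m_2 \mu_2 - m_3 \mu_3 - m_4 \mu_4 + m_i + [\mu_j = \mu_k < \mu_i]
\]
extracted from (\ref{eqn: sign rule edge}). The matching is a parity computation for a weighted box count, structurally analogous to the subdivision argument in the proof of Theorem \ref{thm: orientation points}, with the additional complication that the twist $\widetilde{(\cdot)}$ mixes the $t_1$-powers with the other variables through the multidegrees $\mathbf{m}$.

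The hardest part will be the base case, since the induction only propagates the sign rule between neighbouring plane partitions and does not pin down the absolute sign. To close this gap, I would compactify the local edge model $Y = \Tot(\oO(m_2)\oplus \oO(m_3)\oplus \oO(m_4) \to \BP^1)$ inside a projective toric Calabi-Yau 4-fold $\bar{Y}$ and consider the $\TT$-fixed ideal sheaf corresponding to the single box $\lambda = \set{(0,0,0)}$ with no extra vertex decorations at either endpoint of $\BP^1$. By Proposition \ref{prop: vertex formalism} the total sign at this fixed point factors as a product of vertex contributions at $\alpha, \beta$ (controlled by Conjecture \ref{conj: sign vertex curves}, reducible to the Kool-Rennemo point sign rule) and the edge contribution for $\lambda$; Theorem \ref{thm: orientation edge} guarantees axis-independence of the latter. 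Matching the total contribution against an independent closed-form computation of the relevant low-order DT invariant — for example via the techniques of \cite[Thm. 1.7]{CK_DT-PT}, or by a dimensional-reduction argument (Remark \ref{rem: dim red curves}) reducing to the classical 3-fold edge vertex of \cite{MNOP_1} — would then isolate the edge sign and anchor the induction. A secondary obstacle is that Conjecture \ref{conj: sign vertex curves} is itself unproven in full generality; one either resolves it by an analogous induction on curve-like profiles, or restricts the compactification so that only point-like vertex profiles appear at the endpoints, where Kool-Rennemo applies directly.
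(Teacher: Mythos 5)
First, a point of order: the statement you are proving is labelled a \emph{Conjecture} in the paper, and the paper contains no proof of it. What the paper proves is only the supporting evidence — canonicality (Theorem \ref{thm: orientation edge}), $\TT$-movability of $\mathsf{e}^i_\lambda$ (Lemma \ref{lemma: edge curves T-movable}), and consistency with dimensional reduction and with prior computations (Remark \ref{rem: global sign patching}). So there is no proof to compare yours against; the question is whether your outline would close the conjecture, and it would not, for the following concrete reasons.

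The inductive step is ill-founded at the $K$-theoretic level. The cancellation trick $e^\TT(U-\overline{U})=(-1)^{\rk U^{\mov}}$, used in Theorems \ref{thm: orientation points} and \ref{thm: orientation edge}, applies only when comparing two square roots of the \emph{same} representation: their difference is anti-self-dual. Here $\mathsf{e}^i_\lambda$ and $\mathsf{e}^i_{\tilde{\lambda}}$ are square roots of \emph{different} classes $\mathsf{E}_\lambda\neq\mathsf{E}_{\tilde{\lambda}}$, so $\mathsf{e}^i_\lambda-\mathsf{e}^i_{\tilde{\lambda}}$ cannot be written as $U-\overline{U}$ (otherwise $\mathsf{E}_\lambda-\mathsf{E}_{\tilde{\lambda}}=0$), and the ratio $e^\TT(-\mathsf{e}^i_\lambda)/e^\TT(-\mathsf{e}^i_{\tilde{\lambda}})$ is a genuine rational function of $\lambda_1,\dots,\lambda_4$, not a sign. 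More fundamentally, the sign in the conjecture is defined by matching $\sqrt{e^\TT}(-T^{\vir}_Z)$ at each fixed point, and that class is determined by the \emph{orientation} of the moduli space at that point; nothing in a formal weight computation tracks how the orientation-induced sign varies between the distinct fixed points indexed by $\tilde{\lambda}$ and $\lambda$. Relating signs at different fixed points is exactly the hard geometric content (this is what the Kool-Rennemo announcement addresses, for points), and it cannot be replaced by the box-counting arguments of this paper, which only ever compare two square roots at a \emph{fixed} $Z$. Your base-case strategy has two further problems. The fallback of compactifying so that ``only point-like vertex profiles appear at the endpoints'' is impossible: a nonempty edge partition $\lambda$ forces the solid partitions at both adjacent vertices to have an infinite leg with asymptotic $\lambda$, i.e.~to be curve-like, so Conjecture \ref{conj: sign vertex curves} cannot be avoided. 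And the anchoring computation is essentially circular: closed-form DT invariants of toric CY 4-folds in curve classes (e.g.~\cite{CK_DT-PT}) are themselves obtained, or only verified, through the localization formula with conjectural signs, while dimensional reduction (Remark \ref{rem: dim red curves}) constrains only subschemes supported in a hyperplane and hence cannot detect the $\mathbf{m}$-dependent terms $f_{\mathbf{m}}(\lambda)+|\lambda|m_i$ of \eqref{eqn: sign rule edge} for general multidegree; since the induction is at fixed $\mathbf{m}$, you would in any case need an independent anchor for every $\mathbf{m}$ with $m_2+m_3+m_4=-2$, not a single low-order check.
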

We prove now that the sign rule (\ref{eqn: sign rule edge}) is canonical, meaning that it does not really depend on choosing a preferred $x_i$-axis.
\begin{theorem}\label{thm: orientation edge}
Let $\lambda$ be  a finite plane partition. For every $i,j=2,3, 4$ we have
\[(-1)^{\sigma_i(\lambda)}e^\TT(-\mathsf{e}_\lambda^i)=(-1)^{\sigma_j(\lambda)}e^\TT(-\mathsf{e}_\lambda^j).\]
\end{theorem}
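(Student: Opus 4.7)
The proof follows the template of Theorems~\ref{thm: orientation points} and~\ref{thm: orientation curves}. By Lemma~\ref{lemma: edge curves T-movable}, both $\mathsf{e}_\lambda^i$ and $\mathsf{e}_\lambda^j$ are $\TT$-movable square roots of $\mathsf{E}_\lambda$, so the difference $D := \mathsf{e}_\lambda^i-\mathsf{e}_\lambda^j$ satisfies $\overline{D}=-D$ and may be written as $U-\overline{U}$ for some $U\in K^0_\TT(\pt)$, with the parity of $\rk U^{\mov}$ well-defined. The identity of equivariant Euler classes then reduces to showing
\[
\rk U^{\mov} \equiv \sigma_j(\lambda)-\sigma_i(\lambda) \pmod{2}.
\]
Taking $i=4$, $j=3$ without loss of generality, the $f_{\mathbf{m}}(\lambda)$ term cancels, and the target parity becomes $|\lambda|(m_3-m_4)+\#\set{(a_2,a_3,a_4)\in\lambda\colon a_2=a_4<a_3}-\#\set{(a_2,a_3,a_4)\in\lambda\colon a_2=a_3<a_4}$.

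I would argue by induction on $|\lambda|$, the empty case being trivial. For $\lambda=\mu\sqcup\set{\mathbf{b}}$ with $\mathbf{b}=(b_2,b_3,b_4)$, expanding $Z_\lambda\overline{Z_\lambda}$ gives
\[
(\mathsf{f}_\lambda^4-\mathsf{f}_\lambda^3) - (\mathsf{f}_\mu^4-\mathsf{f}_\mu^3) = (1-t_2^{-1})(t_4^{-1}-t_3^{-1})\bigl(t^{\mathbf{b}}\overline{Z_\mu}+t^{-\mathbf{b}}Z_\mu+1\bigr),
\]
and hence the increment $(\mathsf{e}_\lambda^4-\mathsf{e}_\lambda^3)-(\mathsf{e}_\mu^4-\mathsf{e}_\mu^3)$ is the image of this class under the operator $V\mapsto(t_1^{-1}V-\widetilde{V})/(1-t_1^{-1})$. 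It therefore suffices to verify that the $U^{\mov}$-rank of this increment has parity $(m_3-m_4)+\delta_{b_2=b_4<b_3}-\delta_{b_2=b_3<b_4}$ modulo $2$.

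Mimicking the bookkeeping of Theorem~\ref{thm: orientation points}, I would decompose the increment as a sum of pieces of the form $X-\overline{X}$ and use the identity $\rk X^{\mov}\equiv\rk X+\rk X^{\fix}\pmod{2}$. Equation~\eqref{eqn: edge mov piece dim} from the proof of Lemma~\ref{lemma: edge curves T-movable} implies that only monomials $t^\nu$ with $\nu_2=\nu_3=\nu_4$ contribute to the $\TT$-fixed part after applying the operator. The trailing $1$ in the parenthesis reproduces the one-box base case $\lambda=\set{(0,0,0)}$, and its contribution can be computed directly from equation~\eqref{eqn: edge mov piece dim} together with the Calabi--Yau relation $m_2+m_3+m_4=-2$, yielding the desired parity $m_3-m_4$. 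The remaining contributions from $(1-t_2^{-1})(t_4^{-1}-t_3^{-1})(t^{\mathbf{b}}\overline{Z_\mu}+t^{-\mathbf{b}}Z_\mu)$ should produce the indicator difference $\delta_{b_2=b_4<b_3}-\delta_{b_2=b_3<b_4}$, with the asymmetry inherited from the antisymmetric factor $(t_4^{-1}-t_3^{-1})$. The main obstacle is precisely this last step: isolating the diagonal monomials $t^\nu$ with $\nu_2=\nu_3=\nu_4$ arising from multiplying $(1-t_2^{-1})(t_4^{-1}-t_3^{-1})$ against $t^{\pm\mathbf{b}}Z_\mu^{\pm}$, tracking the Calabi--Yau cancellations, and verifying that the telescoping sum over boxes $\mathbf{c}\in\mu$ collapses to exactly the two indicators rather than a larger combinatorial quantity.
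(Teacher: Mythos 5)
Your proposal follows the same route as the paper: the same reduction to the parity of $\rk U^{\mov}$, the same operator $V\mapsto(t_1^{-1}V-\widetilde V)/(1-t_1^{-1})$ (which the paper packages as $B(V)=\sum_\nu t^\nu A(\mathbf m\nu)$, using equation~\eqref{eqn: split with no pole}), the same induction by adding one box, and the same observation that only diagonal monomials $\nu_2=\nu_3=\nu_4$ contribute to the fixed part. Your target parities also agree with the paper's mod $2$: the paper's base case gives $m_3+m_4\equiv m_3-m_4$, and its increment $\rk(B(Y_4)-B(Y_3))^{\fix}$ equals $1$ exactly when $\mu_2=\mu_3<\mu_4$ or $\mu_2=\mu_4<\mu_3$, which matches your $\delta_{b_2=b_4<b_3}-\delta_{b_2=b_3<b_4}$ since the two conditions are mutually exclusive. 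The one organizational difference is that the paper first peels off $W_\lambda=Z_\lambda\overline{Z_\lambda}(t_4^{-1}-t_3^{-1})$ so that $\mathsf e^4_\lambda-\mathsf e^3_\lambda=B(W_\lambda)-\overline{B(W_\lambda)}$ globally, and then only has to track $\rk B(W_\lambda)^{\mov}$ through the induction; this is slightly cleaner than carrying the full antisymmetrized increment, but mathematically equivalent.

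The gap is exactly the step you flag as "the main obstacle": the proposal stops at asserting that the diagonal contributions from $(1-t_2^{-1})(t_4^{-1}-t_3^{-1})(t^{\mathbf b}\overline{Z_\mu}+t^{-\mathbf b}Z_\mu)$ collapse to the two indicators. This is where essentially all of the paper's work lies: it computes $\rk B(Y_4)^{\fix}$ as a triple sum of delta functions over $\nu\le\mu$ (using that $\rk B(Y_4)=-2m_4|\lambda|$ is even, so the movable and fixed parities agree), antisymmetrizes in $3\leftrightarrow 4$, and then checks by a case analysis that the eight resulting boundary sums cancel down to the stated indicator. Your outline correctly predicts the answer but does not establish it, so as written the proof is incomplete at precisely the point where it could in principle fail; you should carry out the delta-function bookkeeping (or an equivalent telescoping argument over the boxes of $\mu$) to close it.
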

\begin{proof}
Without loss of generality assume $i=4, j=3$. Say, for $k\in \BZ$,
\[
A(k)=\begin{cases} -\sum_{i=0}^{-k}t_1^{i}&k \leq 0,\\
    0 & k=1,\\
    t_1^{-1}\sum_{i=0}^{k-2}t_1^{-i}& k\geq 2\\
    \end{cases}
\]
and, for a  $\TT$-representation $V$,
\[
B(V)=\sum_{\nu\in V}t^{\nu}A(\mathbf{m}\nu)\in K^0_\TT(\pt),
\]
where the sum is over the weight spaces of $V$. We extend the definition of $B(V)$ by linearity to $K^0_\TT(\pt)$.  By (\ref{eqn: split with no pole}), we have
\begin{align*}
   \mathsf{e}_\lambda^4-\mathsf{e}_\lambda^3=B(\mathsf{f}_\lambda^4-\mathsf{f}_\lambda^3),
\end{align*}
Notice the decomposition 
\begin{align*}
  \mathsf{f}_\lambda^4-\mathsf{f}_\lambda^3&=W_\lambda+\overline{W_\lambda}(t_2t_3t_4)^{-1},\\
  W_\lambda&=Z_\lambda \overline{Z_\lambda}(t_4^{-1}-t_3^{-1}).
\end{align*}
Then 
\begin{align*}
   \mathsf{e}_\lambda^4-\mathsf{e}_\lambda^3&= B(W_\lambda)+B(\overline{W_\lambda}(t_2t_3t_4)^{-1})\\
   &=B(W_\lambda)-\overline{B(W_\lambda)},
\end{align*}
by which   we conclude that 
\[e^\TT(  \mathsf{e}_\lambda^4-\mathsf{e}_\lambda^3)=(-1)^{\rk{ B(W_\lambda)}^{\mov}}.\]
We compute the parity of $\rk{ B(W_\lambda)}^{\mov}$ by induction on the size of $\lambda$. If $|\lambda|=1$, we clearly have 
\[ \rk{B(W_\lambda)}^{\mov}=m_4+m_3  \mod 2.\]
Suppose now that the claim holds for all plane partition of size $|\lambda|\leq n$ and consider a plane partition $\Tilde{\lambda}$ of size $|\Tilde{\lambda}|=n+1$; this can be seen as a plane partition $\lambda$ of size $n$ with an extra box over it, corresponding to a $\BZ^3$-lattice point $\mu=(\mu_2,\mu_3,\mu_4)$.  We have
\begin{align*}
    B(W_{\Tilde{\lambda}} )&=B(W_{\lambda})+B(Y_4)-B(Y_3)+ B(t_4^{-1}-t_3^{-1}),\\
    Y_i&=t_i^{-1}(Z_\lambda t^{-\mu}+ \overline{Z_{\lambda}}t^\mu) \quad i=3,4,
\end{align*}
and  by the inductive step 
\begin{align*}
    \rk B(W_{\lambda})^{\mov}&=\sigma_4(\lambda)-\sigma_3(\lambda)\mod 2,\\
    \rk B(t_4^{-1}-t_3^{-1})^{\mov}&=m_4-m_3 \mod 2.
\end{align*}
Clearly, $\rk B(Y_4)^{\mov}=\rk B(Y_4)^{\fix} \mod 2$. In fact, 
\begin{align*}
    \rk B(Y_4)&=\sum_{\nu\in Z_{\lambda}}\left(\mathbf{m}(\mu-\nu+(0,0,-1))+\mathbf{m}(\nu-\mu+(0,0,-1)) \right)\\
    &=-2m_4|\lambda |.
\end{align*}
A simple analysis of $B(Y_4)^{\fix}$ as in (\ref{eqn: edge mov piece dim}) yields
\begin{multline}
  \rk (B(Y_4))^{\fix}=\#\set{(\nu\in\lambda \colon \mu_2-\nu_2=\mu_3-\nu_3=\mu_4-\nu_4+1}\\-\#\set{\nu\in\lambda \colon \mu_2-\nu_2=\mu_3- \nu_3=\mu_4- \nu_4-1},   
\end{multline}
 where $\nu=(\nu_2, \nu_3,\nu_4)$; in particular, it has to satisfy $\nu\leq \mu$. Therefore we can write it as
 \begin{align*}
     \rk (B(Y_4))^{\fix}&= \sum_{i=0}^{\mu_2}\sum_{j=0}^{\mu_3}\sum_{k=0}^{\mu_4}\left(\delta_{\mu_2-i, \mu_3-j, \mu_4-k+1}-\delta_{\mu_2-i, \mu_3-j, \mu_4-k-1}\right) \\
     &= \sum_{i=0}^{\mu_2}\sum_{j=0}^{\mu_3}\left(\sum_{k=-1}^{{\mu_4}-1}\delta_{{\mu_2}-i, {\mu_3}-j, {\mu_4}-k}-\sum_{k=1}^{{\mu_4}+1}\delta_{{\mu_2}-i, {\mu_3}-j, {\mu_4}-k}\right)
 \end{align*}
By symmetry we may compute the difference
\begin{multline*}
    \rk (B(Y_4)-B(Y_3) )^{\fix}=\sum_{i=0}^{\mu_2}\left(\sum_{k=-1}^{\mu_4-1}\delta_{\mu_2-i,0,\mu_4-k}+ \sum_{j=0}^{{\mu_3}-1}\delta_{\mu_2-i,\mu_3-j,\mu_4+1}- \sum_{j=-1}^{\mu_3-1}\delta_{\mu_2-i,\mu_3-j,0}\right. \\
 \left. -\sum_{k=0}^{\mu_4-1}\delta_{\mu_2-i,\mu_3+1,\mu_4-k}   -\sum_{k=1}^{\mu_4+1}\delta_{\mu_2-i,\mu_3,\mu_4-k}- \sum_{j=1}^{\mu_3}\delta_{\mu_2-i,\mu_3-j,-1}+\sum_{j=1}^{\mu_3+1}\delta_{\mu_2-i,\mu_3-j,\mu_4}+\sum_{k=1}^{\mu_4}\delta_{\mu_2-i,-1,\mu_4-k} \right)
\end{multline*}
Further analyzing which of these sums actually contribute to the rank, we finally get that
\begin{align*}
      \rk (B(Y_4)-B(Y_3) )^{\fix}= \begin{cases} 1 & \mu_2=\mu_3< \mu_4,\\ 1 & \mu_2=\mu_4< \mu_3,\\ 0 & \mbox{else}.
     \end{cases} \mod 2
\end{align*}
Therefore we conclude that
\[
\rk B(W_{\Tilde{\lambda}})^{\mov}=\sigma_4({\Tilde{\lambda}})-\sigma_3({\Tilde{\lambda}})\mod 2,
\]
which finishes the inductive step.\end{proof}
\begin{remark}\label{rem: global sign patching}
Let $X=K_Y$ be the canonical bundle of a smooth projective toric  3-fold $Y$ and consider $\Hilb^n(X,\beta)$, where $\beta\in H_2(X,\BZ)$ is a class pulled-back from $Y$. Consider a $\TT$-fixed point $Z\in \Hilb^n(X,\beta)^\TT $ (corresponding to a partition data $\set{\pi_\alpha, \lambda_{\alpha\beta}}_{\alpha,\beta}$)  scheme-theoretically supported on the zero section of $X\to Y$. Locally on the toric charts,  label the fiber direction by $x_4$ and denote by $m_{\alpha\beta}''$ the degree of the normal bundle of $L_{\alpha\beta}$ in the $x_4$-direction. Consider the square root of $T_Z^{\vir}$ given by 
\[\mathsf{v}_Z=\sum_{\alpha\in V(X)}\mathsf{v}_\alpha^4+ \sum_{\alpha\beta\in E(X)} \mathsf{e}_{\alpha\beta}^4.\]
By Remark \ref{rmk: dim red curves vertex}, the sign rules proposed for vertex and edge terms imply that the correct sign would be
\begin{align*}
    (-1)^{\sigma(Z,\mathsf{v}_Z) }&=\prod_{\alpha\in V(X)}(-1)^{\sigma_4(\pi_\alpha)}\cdot\prod_{\alpha\beta\in E(X)}(-1)^{\sigma_4(\lambda_{\alpha\beta})}\\
    &=\prod_{\alpha\in V(X)}(-1)^{|\pi_\alpha|}\cdot \prod_{\alpha\beta\in E(X)}(-1)^{|\lambda_{\alpha\beta}|m_{\alpha\beta}''+ f_{\mathbf{m}_{\alpha\beta}}(\lambda_{\alpha\beta})}\\
    &= (-1)^{n+ c_1(Y)\cdot \beta},
\end{align*}
where the last equality follows from (\ref{eqn: holom euler char with partitions}) and
\begin{align*}
    \sum_{\alpha\beta\in E(X)}|\lambda_{\alpha\beta}|m_{\alpha\beta}''&=-     \sum_{\alpha\beta\in E(X)}|\lambda_{\alpha\beta}|\deg {N_{Y/X}}_{|_{L_{\alpha\beta}}}\\
    &= -     \sum_{\alpha\beta\in E(X)}|\lambda_{\alpha\beta}|\deg {K_Y}_{|_{L_{\alpha\beta}}}\\
    &= \sum_{\alpha\beta\in E(X)}|\lambda_{\alpha\beta}| c_1(T_Y)\cdot [L_{\alpha\beta}]\\
    &= c_1(Y)\cdot \beta.
\end{align*}
 The same sign was proposed in a similar setting for  stable pair invariants \cite[ Prop. 4.2, Rmk. A.2]{CKM_Stable_Pairs}, where such local geometries are studied, motivated by a choice of preferred orientation as in \cite{Cao_GV_inv_II}. 
\end{remark}

\section{Refinements}
\subsection{$K$-theoretic invariants}
Let $X$ be a smooth  Calabi-Yau 4-fold and $M$ a moduli space of compactly supported sheaves on $X$. Oh-Thomas defined \cite[Def. 5.9]{OT_1} a \emph{(twisted) virtual structure sheaf}
\[\widehat{\oO}^{\vir}_M\in K_0(M),\]
which depends on a chosen orientation and whose properties mimic the virtual structure sheaf with the Nekrasov-Okounkov twist in the classical 3-fold theory \cite{NO_membranes_and_sheaves}. While in the 3-fold theory the twist is introduced to make  DT invariants more symmetric, here it is actually necessary to be defined. We define the $K$-theoretic version of the invariants  (\ref{eqn: DT_inv}) by means of Oh-Thomas $K$-theoretic virtual localization theorem (cf. \cite[Thm. 7.3]{OT_1}); such invariants are studied in \cite{Nek_magnificient_4, NP_colors, CKM_K_theoretic, Bojko_wall-crossing, Bojko_thesis, BFTZ_ADHM8D}.
\begin{definition}
   Let $V\in K_0^\TT(\Hilb^n(X,\beta))$. The $K$-theoretic $\TT$-equivariant Donaldson-Thomas invariants of $X$ are 
   \begin{equation*}\label{eqn: DT_inv K-th}
        \DT^K_n(X,\beta;V)=\sum_{Z\in \Hilb^n(X,\beta)^\TT}\sqrt{\mathfrak{e}^\TT}(-T^{\vir}_Z)\cdot V|_Z\in \frac{\BQ(t_1, t_2, t_3, t_4)}{(t_1t_2t_3t_4-1)}.
   \end{equation*}
\end{definition}
Here $\mathfrak{e}^\TT$ is the ($\TT$-equivariant) $K$-theoretic Euler class, which is defined as follows. Let $X$ be a scheme and $V$ a $\TT$-equivariant locally free sheaf on $X$. We define
\[\mathfrak{e}^{\TT}(V):= \Lambda^\bullet V^\vee=\sum_{i\geq 0} (-1)^i \Lambda^i V^\vee\in K^0_{\TT}(X),\]
and extend it  by linearity to any class $V\in K^0_\TT(X)$. Finally, $ \sqrt{\mathfrak{e}^\TT}(\cdot)$ is the ($\TT$-equivariant) $K$-theoretic square-root Euler  class; the complete description and construction of this class is in \cite[Sec. 5.1]{OT_1}.
Let $ V$ be a $\TT$-representation with a   square root  $T$ in $K^0_\TT(\pt)$. Its $K$-theoretic square root Euler class satisfies
\begin{align*}
    \sqrt{\mathfrak{e}^\TT}(V)&=\pm \mathfrak{e}^\TT(T)\otimes ({\det}T)^{\frac{1}{2}} \in K^0_\TT\left(\pt, \BZ\left[\frac{1}{2}\right]\right).
\end{align*}
For an irreducible $\TT$-representation\footnote{To be precise, we should replace the torus $\TT$ with its double cover where the character $t^{\frac{\mu}{2}}$ is well-defined (cf. \cite[Sec. 7.1.2]{NO_membranes_and_sheaves}).} $t^\mu$,  define
\[
[t^\mu]=t^{\frac{\mu}{2}}-t^{-\frac{\mu}{2}}\in  K^0_\TT(\pt)
\]
and extend it by linearity to any  $V\in K_\TT^0(\pt)$. It is shown in \cite[Sec. 6.1]{FMR_higher_rank} that 
\[
\mathfrak{e}^\TT(V)\otimes ({\det}V)^{\frac{1}{2}}=[V],
\]
for any virtual $\TT$-representation $V\in K_\TT^0(\pt)$. Therefore, given  square roots $\mathsf{v}_\alpha, \mathsf{e}_{\alpha\beta}$ of $\mathsf{V}_\alpha, \mathsf{E}_{\alpha\beta}$, we have
\begin{align*}
\DT^K_n(X, \beta;V)=\sum_{Z\in \Hilb^n(X, \beta)^\TT}\prod_{\alpha\in V(X)}(-1)^{\sigma(Z,\mathsf{v}_\alpha)}[-\mathsf{v}_\alpha]   \prod_{\alpha\beta\in E(X)}(-1)^{\sigma(Z,\mathsf{e}_{\alpha\beta})} [-\mathsf{e}_{\alpha\beta}]  \cdot V|_{Z}.
\end{align*}
The operator $[\cdot]$ satisfies $[t^{-\mu}]=-[t^\mu]$, the same multiplicative property of $e^\TT(\cdot)$; therefore, the results of Theorem \ref{thm: orientation points}, \ref{thm: orientation curves} and \ref{thm: orientation edge} hold the same replacing $e^\TT(\cdot)$ by $[\cdot]$.
\subsection{Elliptic invariants}
An elliptic refinement of DT invariants was proposed  in \cite[Sec. 8.2]{FMR_higher_rank} and studied in \cite{Bojko_wall-crossing}. Set
 \begin{align*}
    \theta(p;y)&=-ip^{1/8}(y^{1/2}-y^{-1/2})\prod_{n=1}^\infty(1-p^n)(1-yp^n)(1-y^{-1}p^n), \\
    \eta(p)&=p^{\frac{1}{24}}\prod_{n\ge 1}(1-p^n).
 \end{align*}
 Set $p=e^{2\pi i \tau}$, with $\tau\in \mathbb{H}=\set{\tau\in \BC | \mathrm{Im}(\tau)>0}$. Denoting $\theta(\tau|z):=\theta(e^{2\pi i\tau};e^{2\pi iz})$,  $\theta$ enjoys the  modular behaviour 
\[
\theta(\tau|z+a+b\tau)=(-1)^{a+b}e^{-2\pi ibz}e^{-i\pi b^2\tau}\theta(\tau|z),\quad a,b\in\BZ.
\]
See \cite[Sec. 8.1]{FMR_higher_rank} and \cite[Sec. 6]{FG_riemann_roch} for related discussion on the modularity of these functions.
    For an irreducible $\TT$-representation $t^\mu$,  define
     \[\theta[t^{\mu}]=  (i\cdot \eta(p))^{-1}\theta(p;t^\mu)\in  K^0_\TT(\pt)\llbracket p  \rrbracket[p^{\pm \frac{1}{12}}]\] 
   and extend it by linearity to any  $V\in K_\TT^0(\pt)$.    Let $V\in K_0^\TT(\Hilb^n(X,\beta))$. The \emph{elliptic} $\TT$-equivariant Donaldson-Thomas invariants of $X$ are 
    \begin{equation*}
      \DT^{ell}_n(X, \beta;V)=\sum_{Z\in \Hilb^n(X, \beta)^\TT}\prod_{\alpha\in V(X)}(-1)^{\sigma(Z,\mathsf{v}_\alpha)}\theta[-\mathsf{v}_\alpha]   \prod_{\alpha\beta\in E(X)}(-1)^{\sigma(Z,\mathsf{e}_{\alpha\beta})} \theta[-\mathsf{e}_{\alpha\beta}]  \cdot V|_{Z}.
   \end{equation*}
   Again, as $\theta[t^{-\mu}]=-\theta[t^\mu]$,  the results of Theorem \ref{thm: orientation points}, \ref{thm: orientation curves} and \ref{thm: orientation edge} hold the same replacing $e^\TT(\cdot)$ by $\theta[\cdot]$.
\bibliographystyle{amsplain-nodash}
\bibliography{The_Bible}
\end{document}